\documentclass{article}
\usepackage[margin=2.9cm]{geometry}             
\usepackage{enumerate,bm,amsmath,amsthm,amsfonts}
\usepackage{paralist}
\usepackage{cleveref}
\DeclareMathOperator{\real}{Re}
\DeclareMathOperator{\imag}{Im}
\newcommand{\C}{\mathbb C}

\newcommand{\R}{\mathbb R}

\newcommand{\dopt}[2]{\frac{\partial #1}{\partial #2}}

\newcommand{\diffable}[1]{{C}^{#1}}

\DeclareMathOperator{\wt}{wt}
\DeclareMathOperator{\Tr}{Tr}
\newtheorem{theorem}{Theorem}
\newtheorem{lemma}[theorem]{Lemma}

\theoremstyle{remark}
\newtheorem{remark}[theorem]{Remark}
\newtheorem{example}[theorem]{Example}
\theoremstyle{definition}
\newtheorem{definition}[theorem]{Definition}
\numberwithin{equation}{section}
\DeclareMathOperator{\Aut}{Aut}
\DeclareMathOperator{\trace}{Tr}

\numberwithin{equation}{section}

\begin{document}

\title{A sphericity criterion for strictly pseudoconvex hypersurfaces 
  in $\mathbb{C}^2$ via invariant curves}

\author{Florian Bertrand, Giuseppe Della Sala and Bernhard Lamel}
\maketitle 
\begin{abstract}
We prove that if every chain on a strictly pseudoconvex hypersurface $M$ in $\mathbb{C}^2$ coincides with the boundary of a stationary disc, then $M$ is locally spherical. 
\end{abstract} 

\section*{Introduction}
Every strictly pseudoconvex hypersurface $M\subset \C^2$ bounding a domain $\Omega\subset \C^2$
carries two natural, biholomorphically invariant families of real curves: the so-called {\em chains} and boundaries of {\em stationary discs}. These come from very different types of 
geometrical constructions. {Chains} have been introduced by Chern and Moser \cite{Chern:1974wu} as the CR geometry analogue of geodesics in Riemannian geometry. Stationary 
discs, on the other hand, are the solutions to the Euler-Lagrange equations of 
Kobayashi extremal discs in $\Omega$. If $M$ is in 
addition real-analytic, it carries a third natural biholomorphically
invariant family of real curves: traces of Segre varieties. A 
theorem of Faran \cite{FaranV:1981iz} shows that if the traces of Segre 
varieties agree with the chains, then $M$ is locally spherical. In a 
former paper \cite{be-de-la} we showed that if the traces of Segre varieties 
agree with the traces of stationary discs, then $M$ is also locally 
spherical. 

In this paper, we address the remaining question: If the traces of 
stationary discs coincide with chains, is $M$ also necessarily spherical? We
have been asked this repeatedly when presenting the results in \cite{be-de-la}, 
and it turns out that the answer is also yes. The natural setting
for this question is for sufficiently smooth hypersurfaces. 

\begin{theorem}
\label{thm:main} Assume that $M$ is  a strictly pseudoconvex hypersurface of class  $
\diffable{12}$
in $\mathbb{C}^2$. If the chains of $M$ are boundaries of stationary 
discs, then $M$ is locally spherical. 
\end{theorem}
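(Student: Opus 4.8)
The plan is to reduce sphericity to the vanishing of a single pointwise invariant and then to extract that invariant from the hypothesis by comparing the two families of curves through a common point. By the classical theory in dimension three (\'E.~Cartan; cf.\ also \cite{Chern:1974wu}), a strictly pseudoconvex $M\subset\C^2$ is locally spherical if and only if its single sixth-order CR curvature vanishes identically; equivalently, writing $M$ in Chern--Moser normal form
\[
\imag w=|z|^2+\sum_{k,l\ge 2}N_{kl}(u)\,z^k\bar z^l,\qquad N_{kl}=\overline{N_{lk}},
\]
with the usual trace normalizations (so that the lowest-weight surviving coefficient is $N_{42}$, of weight six), sphericity is equivalent to the vanishing of $N_{42}$ at every point. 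Since $M$ is of class $\diffable{12}$, we may place it in normal form to weight six with several derivatives to spare, which is exactly what the expansions below require. Thus it suffices to fix an arbitrary point, take it to be the origin in normal coordinates, and prove that $N_{42}(0)=0$; letting the point vary then yields that $M$ is spherical.

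First I would expand the chains. Chains are the projections to $M$ of the integral curves of the canonical second-order equation (Cartan connection) attached to the CR structure, and through the origin they form a two-parameter family indexed by the admissible transverse directions (the projectivized directions in $T_0M$ missing the complex tangent). Using the chain equations of \cite{Chern:1974wu} in the normal coordinates above, I would compute, for each direction, the chain through the origin as an unparametrized curve up to the first order at which $N_{42}$ enters, recording the leading correction to the model chain (a circle, i.e.\ a complex-line trace on the sphere) as an explicit linear expression in $N_{42}(0)$ and finitely many of its derivatives.

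Next I would carry out the analogous expansion for the boundaries of stationary discs, reusing the framework developed in \cite{be-de-la}: stationary discs attached to $M$ near the origin solve a nonlinear Riemann--Hilbert/Euler--Lagrange problem whose solutions depend smoothly on the normal-form data, and their boundary curves likewise form a two-parameter family through the origin. Expanding these solutions to the same weight, I would extract the boundary curves as unparametrized curves and record their leading deviation from the model, again as an explicit linear expression in $N_{42}(0)$ and its derivatives.

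The comparison is the heart of the argument, and the hard part. The hypothesis says the two families coincide as sets of unparametrized curves; matching them through the origin requires identifying the chain direction-parameter with the stationary-disc direction-parameter and quotienting out the reparametrization freedom on each side (a chain carries a projective parameter, a stationary-disc boundary an a priori different one). Once this bookkeeping is in place, the low-order jets of the two families agree automatically — both are tangent to the prescribed direction and share the model's second-order behavior — so the first genuine matching condition occurs precisely at the weight where the curvature enters. That condition equates the two leading corrections computed above and hence reads as a nontrivial linear relation forcing $N_{42}(0)=0$. The principal difficulties are thus twofold: producing the two high-order expansions from the two very different analytic descriptions (a second-order ODE on the one hand, a Riemann--Hilbert boundary value problem on the other) to a common, sufficiently high order, and reconciling their parametrizations so that the comparison is genuinely one of geometric curves. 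Since the origin was an arbitrary point of $M$, the vanishing of $N_{42}$ propagates over all of $M$, and Cartan's criterion gives that $M$ is locally spherical.
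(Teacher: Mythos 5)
Your outline shares the paper's skeleton (reduce to pointwise vanishing of the weight-six coefficient at an arbitrary point in normal form, then detect that coefficient by comparing the two invariant families of curves near the model circles), but it stops short of the step that actually constitutes the proof. Everything hinges on your assertion that the matching condition at weight six ``reads as a nontrivial linear relation forcing $N_{42}(0)=0$.'' That nontriviality is exactly what must be computed, and a priori it could fail: both families agree with the model to all lower weights, and nothing you say rules out that their weight-six corrections coincide as well, in which case the comparison yields $0=0$ and no conclusion. The paper's entire Section~3 is, in effect, the verification of this point: it computes the chain deviation explicitly via the Fefferman Hamiltonian for a scaled family of closed chains, obtaining $z_2(s,t)=se^{it}-\tfrac{4}{3}\,a\,s^{5}e^{3it}+O(s^{6})$ (Lemma~\ref{eqz_2}), and then shows the obstruction survives, concluding $a=0$. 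Without an analogous computation on both sides, your argument is a strategy, not a proof.

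There are two further gaps worth naming. First, your claim that the leading deviation of a stationary-disc boundary is ``an explicit linear expression in $N_{42}(0)$ and finitely many of its derivatives'' is unjustified: a stationary disc is the solution of a global Riemann--Hilbert problem, so the jet of its boundary at the origin depends on $M$ along the entire (macroscopic) boundary curve, not on the jet of $M$ at the origin; to localize the deviation one needs a shrinking, anisotropically scaled family of curves, which is precisely the role of the parameter $s$ in the paper and which your fixed two-parameter family of chains through the origin does not provide. Second, the hypothesis is only one inclusion (each chain bounds \emph{some} stationary disc), so the matching carries unknown data --- which disc, which lift, which parametrization. The paper never expands stationary discs at all; instead it tests the chain directly against the moment conditions \eqref{eqmom} for the existence of a holomorphic lift, and the unknowns appear as the Fourier coefficients $\gamma_k(s)$ of an undetermined positive function $c(s,t)$, which are then eliminated by combining the moment conditions coming from the two components $G_s$ and $H_s$ of the lift. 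Your proposal has no mechanism for absorbing this freedom; ``quotienting out the reparametrization'' does not address it, since the ambiguity is in the lift and the attached disc, not merely in the parametrization of the curve.
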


We remark that $M$ in Theorem~\ref{thm:main} is not assumed to be closed (so the theorem is a local result).
In 
order to prove this theorem, we cannot utilize the cited results. 
Instead, we rely on Fefferman's characterization of chains as projections of 
light rays of an associated Lorentz metric and analyzing its 
Hamiltonian. We construct a special family of chains centered at the origin and
show that if each of the members of this family is the trace of
a stationary disc, then the origin is an umbilical point. 

The organization of this paper is to review the basics in \cref{sec:prelim}, summarize facts about 
the chains in \cref{sec:Ham}, and give the proof of a (slightly sharpened) version of the 
theorem in \cref{sec:proof}.

\section{Preliminaries}\label{sec:prelim}

\subsection{Intrinsic geometry of strictly pseudoconvex hypersurfaces} 

In this section we give a quick review of the basic local biholomorphic 
equivalence theory for strictly pseudoconvex hypersurfaces in $\C^2$. We thank one of the referees for the suggestion to include some background material in this paper, and hope the reader will enjoy it.

A hypersurface $M\subset \C^2$ inherits
the complex structure of $\C^2$ on its complex tangent spaces $T^c_p M = T_p M \cap i T_p M$. The complex tangent
bundle $T^c M$ is defined by the vanishing of a not uniquely determined $1$-form $\theta$, which we 
usually call {\em characteristic form}; the annihilator $(T^c M)^\perp = T^0 M = \mathcal{N} M \subset T^* M$ is called the 
{\em characteristic} or {\em conormal} bundle of $M$. If  $d \theta$ induces a hermitian 
inner product on $T^c M$ by $h(X,Y) = i \theta ( [X,\bar Y]) = - i d\theta (X,\bar Y)$, called the Levi form of $M$ with 
respect to $\theta$, then we say  
that $M$ is {\em strictly pseudoconvex}. 

Thus, geometrically speaking, a strictly pseudoconvex hypersurface $M\subset \C^2$ can be thought of as a 
$3$-dimensional manifold with a contact structure with some additional compatibility conditions. The choice of 
a contact form gives rise to a {\em pseudohermitian structure} $(M,\theta)$. 

The ambivalence in choosing a contact form gives rise to fascinating mathematics which mixes
aspects of complex, conformal, contact, and symplectic geometry. The {\em equivalence problem} for 
strictly pseudoconvex hypersurfaces in $\C^2$ was solved in a series of groundbreaking papers by 
E. Cartan \cite{Cartan:1932ws,Cartan:1933ux} applying his method of equivalence; later, Tanaka \cite{Tanaka:1962ti} and Chern-Moser \cite{Chern:1974wu} gave 
solutions of the problem for strictly pseudoconvex and Levi-nondegenerate hypersurfaces in higher dimensions. For the convenience of the 
reader, we recall some of the necessary background, with a view towards the Fefferman construction 
of chains which we are going to use.

If we start with an arbitrary characteristic 
form $\theta$ we can consider a real line bundle $E$ over $M$ consisting of the multiples $u \theta$, where $u>0$. 
The form $\omega = u \theta$ is intrinsically defined, and (on $E$) we have 
 \[ d\omega = i g_{1 \bar 1} \omega^1 \wedge \bar \omega^1 + \omega\wedge \varphi \]
 with a real one form $\varphi$; the forms $\omega$, $\omega^1$, $\bar \omega^1$,  and $\varphi$ span 
 $\C T^* E$. Since we assume that $M$ is strictly pseudoconvex, we have $g_{1 \bar 1} \neq 0$, and we will assume for simplicity that $g_{1 \bar 1} = 1$; the more general case where $g_{1 \bar 1}$ is not assumed to be constant follows in a similar but more involved way. Since this short review is only meant to recall how the main computations work, we opted to keep the simple variant. We follow the 
 notation of Chern-Moser \cite{Chern:1974wu} so that the reader can pick up the 
 necessary modifications in that source easily. 

 Now any other frame of $\C T^* E$ satisfying the condition above is given by 
 \[
 \begin{pmatrix} \tilde \omega \\
 \tilde \omega^1 \\
 \bar {\tilde \omega}^1 \\
 \tilde \varphi 
 \end{pmatrix} = 
 \begin{pmatrix} 1 & 0 & 0 & 0 \\
 \lambda & \mu & 0 & 0 \\
 \bar \lambda & 0 & \bar \mu & 0 \\
 s & i  \mu \bar \lambda & -i  \bar \mu \lambda & 1
 \end{pmatrix}
 \begin{pmatrix} \omega \\
  \omega^1 \\
  \bar \omega^1 \\
  \varphi 
  \end{pmatrix}
   \]
   where $|\mu|^2 =1$. The group of matrices of this form is denoted by $G_1$, and 
   we can form the principal $G_1$-bundle $Y$ over $E$; thus $s$, $\lambda$, and $\mu$ are fiber coordinates. One 
   has the integrability condition
   \[ d\omega^1 = \omega^1 \wedge \alpha + \omega \wedge \beta, \]
   for every frame as above, with some not uniquely determined forms $\alpha$ and $\beta$. 
   
   One then obtains a uniquely determined frame $\omega, \omega^1, \bar \omega^1, \varphi, \alpha, \beta, \psi$ of 
   $\C T^* Y$ satisfying a number of identities: 

   \begin{theorem}[\cite{Cartan:1932ws}] \label{thm:intrinsic} There exists a unique frame 
   $\omega, \omega^1, \bar \omega^1, \varphi, \alpha, \beta, \psi$ of 
   $\C T^* Y$ and invariantly defined functions $Q,R$ such that 
   \[ \begin{aligned}
   d\omega &= i  \omega^1 \wedge \bar \omega^1 + \omega\wedge \varphi \\ 
   d\omega^1 &= \omega^1 \wedge \alpha + \omega \wedge \beta \\
   \varphi &= \alpha + \bar \alpha \\
   d\varphi &=    i \omega^1 \wedge \bar \beta 
   -i  \bar \omega^1 \wedge \beta +  \omega \wedge \psi \\
   d \alpha &= i \bar \omega^1 \wedge \beta + 2i \omega^1 \wedge \bar \beta- \frac{\psi}{2}\wedge \omega \\
   d \beta &= \bar \alpha \wedge \beta - \frac{\psi}{2} \wedge \omega^1 + Q \bar \omega^1 \wedge \omega \\ 
   d \psi &= \varphi \wedge \psi + 2 i \beta \wedge \bar \beta + (R \omega^1 + \bar R \bar \omega^1) \wedge \omega
   \end{aligned} \]
   \end{theorem}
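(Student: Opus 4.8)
The natural approach is Cartan's method of equivalence, building the canonical coframe on $Y$ by successively reducing the ambiguity of the remaining forms and then prolonging. On $Y$ the soldering forms $\omega,\omega^1,\bar\omega^1$ are tautological, so they are fixed from the outset; the task is to pin down $\varphi,\alpha,\beta$ and to produce the new form $\psi$. The first structure equation $d\omega = i\,\omega^1\wedge\bar\omega^1 + \omega\wedge\varphi$ determines $\varphi$ only modulo semibasic forms: replacing $\varphi$ by $\varphi + a\,\omega$ preserves it, while any $\omega^1,\bar\omega^1$ contribution is essential torsion measured against the nondegenerate Levi term $i\,\omega^1\wedge\bar\omega^1$. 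Strict pseudoconvexity (the nonvanishing of $g_{1\bar 1}$, normalized to $1$) is exactly what makes the associated linear maps on torsion coefficients invertible, so that at each stage the torsion can be absorbed in a unique way.

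First I would impose the reality normalization $\varphi = \alpha + \bar\alpha$ together with the integrability equation $d\omega^1 = \omega^1\wedge\alpha + \omega\wedge\beta$; differentiating $d\omega$ and $d\omega^1$ and using $d^2=0$ yields algebraic relations among the torsion coefficients. Matching these against the prescribed normal forms for $d\varphi$ and $d\alpha$ -- in particular the specific coefficients $i$, $2i$, and $-\tfrac12$ appearing in Theorem~\ref{thm:intrinsic} -- fixes the remaining freedom in $\alpha$ and $\beta$ and reduces the structure group down to the one-dimensional subgroup corresponding to the parameter $s$. At this point the coframe is not yet rigid, and one prolongs: the form $\psi$ is introduced as the connection form for the surviving parameter, appearing first in $d\varphi = i\,\omega^1\wedge\bar\beta - i\,\bar\omega^1\wedge\beta + \omega\wedge\psi$. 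Its residual ambiguity, a multiple of $\omega$, is eliminated by demanding that $d\beta$ take the stated form $d\beta = \bar\alpha\wedge\beta - \tfrac{\psi}{2}\wedge\omega^1 + Q\,\bar\omega^1\wedge\omega$; comparing coefficients forces $\psi$ to be unique and simultaneously introduces the first essential invariant $Q$ as the coefficient that cannot be normalized away.

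Finally I would close the system by computing $d\psi$ and imposing $d^2=0$ throughout (the Bianchi identities). These identities confirm that the whole collection is consistent, show that $Q$ and the second invariant $R$ appearing in $d\psi = \varphi\wedge\psi + 2i\,\beta\wedge\bar\beta + (R\,\omega^1 + \bar R\,\bar\omega^1)\wedge\omega$ are genuine functions on $Y$ (that is, invariantly defined, transforming tensorially under $G_1$), and -- because every normalization along the way was forced -- establish uniqueness: any two frames satisfying all the equations differ by a semibasic correction that the structure equations compel to vanish. The main obstacle is the torsion-absorption bookkeeping: one must check at each reduction and at the prolongation step that the prescribed numerical coefficients are simultaneously and uniquely attainable, and that the essential torsion collapses to exactly the two invariants $Q$ and $R$ rather than leaving further undetermined components. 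This is precisely the computational heart of Cartan's argument, and it is where strict pseudoconvexity enters decisively.
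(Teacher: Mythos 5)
Your proposal follows essentially the same route as the paper: Cartan's equivalence method carried out by successive torsion normalization --- fixing $\varphi = \alpha + \bar\alpha$, absorbing the $\omega^1\wedge\bar\omega^1$ and $\omega^1\wedge\omega$ torsion of $d\alpha$ to pin down $\alpha$ and $\beta$, introducing $\psi$ through the $d\varphi$ equation and fixing its residual $\omega$-ambiguity by the normal form of $d\beta$ (which produces $Q$), and finally obtaining $R$ and consistency from $d^2=0$. The paper's proof is precisely this scheme written out explicitly; the transformation laws $\tilde S = S + 4iD$, $\tilde V = V - \tfrac{3i}{2}\bar E$, $\tilde P = P + G$ constitute the ``bookkeeping'' you defer, and they confirm that each normalization is uniquely attainable, so your outline is correct and matches the paper step for step.
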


\begin{definition}
A curve $\gamma$ is called a {\em chain} if it solves
the system of ODEs $\omega^1 = \beta =0$.
\end{definition}

We note that the equations for the real 
forms $\omega$, $\varphi$, and $\psi$ along a chain simply read 
\[ d\omega = \omega \wedge \varphi , \quad d\varphi = \omega\wedge\psi, \quad d\psi = \varphi \wedge \psi. \]
One can use these equations to therefore introduce a 
canonical parameter along the chain defined up to a linear fractional map. 

We now give, as promised, the details for the construction of the canonical forms in Theorem~\ref{thm:intrinsic}, basically to set the stage. We also point the reader to the 
 book by Jacobowitz \cite{MR1067341}. 

    First, one observes that 
   exterior differentiation of the frame conditions 
   \begin{equation}
      \label{e:frame} \begin{aligned} d\omega &= i  \omega^1 \wedge \bar \omega^1 + \omega\wedge \varphi \\
    d\omega^1 &= \omega^1 \wedge \alpha + \omega \wedge \beta \end{aligned}
   \end{equation}
    yields 
   \begin{equation}\label{e:dframe}
   \begin{aligned} 
   0 &=  i d \omega^1 \wedge \bar \omega^1 - i  \omega^1 \wedge d \bar \omega^1 + d \omega \wedge \varphi - \omega \wedge d \varphi  \\
   & = i \left( \omega^1 \wedge \alpha + \omega \wedge \beta \right) \wedge \bar \omega^1 - i  \omega^1 \wedge \left( \bar \omega^1 \wedge \bar \alpha +  \omega \wedge \bar \beta \right) \\&\qquad + \left( i  \omega^1 \wedge \bar \omega^1 + \omega\wedge \varphi \right) \wedge \varphi - \omega \wedge d \varphi  \\ 
   &=  i (- \alpha -\bar \alpha + \varphi) \wedge  \omega^1 \wedge \bar \omega^1 + \left( - d\varphi + i \beta \wedge  \bar \omega^1 -  i \bar \beta \wedge \omega^1    \right)\wedge \omega  \\
   0 &= d\omega^1 \wedge \alpha - \omega^1 \wedge d\alpha + d\omega \wedge \beta - \omega \wedge d \beta \\
   &=  \omega \wedge \beta \wedge \alpha - \omega^1 \wedge d \alpha + ( i \omega^1 \wedge \bar \omega^1 + \omega\wedge \varphi ) \wedge \beta - \omega \wedge d\beta\\
   &= (-d\alpha + i \bar \omega^1 \wedge \beta)\wedge \omega^1 +  (\beta\wedge \alpha + \varphi\wedge \beta - d \beta ) \wedge \omega ,
   \end{aligned}
   \end{equation}
   It follows from the first equation in \eqref{e:dframe} that $- \alpha -\bar \alpha + \varphi = A \omega^1 + \bar A \bar \omega^1 + C \omega$ with $C=\bar C$, 
   i.e. with the choice $\tilde \alpha = \alpha + A \omega^1 + \frac{C}{2} \omega$ we have $\varphi = \tilde \alpha + \bar {\tilde \alpha}$. It is easy to see that $\varphi$
   with this property are unique up to multiples of $\omega$, and for such a choice of $\varphi$, we have 
   $  - d\varphi  + i \beta \wedge  \bar \omega^1 -  i \bar \beta \wedge \omega^1 = - \omega \wedge \psi $ for a real $1$-form $\psi$.  
   Summarizing, we have imposed the following restrictions: 
   \[ 
   \begin{aligned} 
   d\omega &= i  \omega^1 \wedge \bar \omega^1 + \omega\wedge \varphi \\
   d\omega^1 &= \omega^1 \wedge \alpha + \omega \wedge \beta \\ 
   \varphi &= \alpha + \bar \alpha \\
   d\varphi &=    i \omega^1 \wedge \bar \beta 
   -i  \bar \omega^1 \wedge \beta +  \omega \wedge \psi
   \end{aligned}
   \]
   and after simplification of the second equation in \eqref{e:dframe}, we  have the equation
   \begin{equation}\label{e:dframe2}
      (d\alpha - i \bar \omega^1 \wedge \beta)\wedge \omega^1 +  ( d \beta - \bar \alpha \wedge \beta ) \wedge \omega = 0.
   \end{equation}

   The forms $\alpha$, $\beta$, and $\psi$ satisfying these 
   identities are uniquely determined up to a change 
   \begin{equation}\label{e:framechange}
       \begin{aligned} 
   \tilde \alpha &= \alpha + D \omega \\ 
   \tilde \beta & = \beta + D \omega^1 + E \omega \\
   \tilde \psi &= \psi + G \omega + i ( \bar E \omega^1-  E \bar \omega^1)  
   \end{aligned}
   \end{equation}
   where $D$ is purely imaginary and $G$ is real. 

One can then check that the 
form $\Phi = d\alpha - i \bar \omega^1  \wedge \beta - 2i  \omega^1 \wedge \bar  \beta $
satisfies $\Phi = - \bar \Phi$ modulo $\omega$:  
\[ \begin{aligned}
\Phi + \bar \Phi  &= d\alpha - i \bar \omega^1  \wedge \beta - 2i  \omega^1 \wedge \bar  \beta + d \bar \alpha + i  \omega^1  \wedge \bar  \beta + 2i \bar \omega^1 \wedge   \beta\\ &= 
 d \varphi  -  i  \omega^1  \wedge \bar  \beta +i \bar \omega^1  \wedge \beta  \\ 
 &= \omega \wedge \psi 
\end{aligned} \]
Since in addition $\Phi\wedge \omega^1 = 0$ modulo 
$\omega$ by \eqref{e:dframe2}, we have 
\[ d\alpha - i\bar \omega^1  \wedge \beta - 2i  \omega^1 \wedge \bar  \beta \cong S \omega^1 \wedge \bar \omega^1  \mod \omega \]
 and the real number $S$ transforms in the following way under a change of frame as above (computing modulo $\omega$):
 \[ \begin{aligned}
 \tilde S  \omega^1 \wedge \bar \omega^1 &= d \tilde \alpha - i \bar{{\omega}}^1 \wedge \tilde \beta  - 
 2i  \omega^1 \wedge \overline{\tilde  \beta} \\ 
 &= d (\alpha + D \omega) - i \bar{{\omega}}^1 \wedge (\beta + D \omega^1  ) - 2i \omega^1 \wedge (\bar \beta - D \bar \omega^1  ) \\ 
 & = d \alpha + D \wedge d\omega - i \bar \omega^1 \wedge \beta +  3 i D \omega^1 \wedge \bar \omega^1 -2i \omega^1 \wedge \bar \beta \\ 
 & = (S + 4 i D) \omega^1 \wedge \bar \omega^1. 
 \end{aligned}
   \] 
The condition $S = 0$ therefore is possible for a certain $D$, and fixes the form $\alpha$ uniquely. We 
proceed to calculate with  
\[ \Phi = d\alpha - i \bar \omega^1 \wedge \beta -2i  \omega^1 \wedge \bar  \beta = \lambda \wedge \omega
\] for some $1$-form $\lambda$, and 
we note right away that 
\[ \begin{aligned}(\lambda + \bar \lambda) \wedge \omega &= 
\Phi + \bar \Phi \\ 
&=  
\omega \wedge \psi, \end{aligned}  \]
in other words, $\lambda + \bar \lambda = - \psi$ 
modulo $\omega$. Plugging  $d\alpha - i \bar \omega^1 \wedge \beta -  2i  \omega^1 \wedge \bar  \beta = \lambda \wedge \omega$ into \eqref{e:dframe2}, we obtain  
\[ \begin{aligned} 0 &= (d\alpha - i \bar \omega^1 \wedge \beta)\wedge \omega^1 +  ( d \beta - \bar \alpha \wedge \beta ) \wedge \omega \\ 
&= ( d \beta - \bar \alpha \wedge \beta +\omega^1 \wedge \lambda  )\wedge \omega,
\end{aligned} \]
so that 
\[  d\beta - \bar \alpha \wedge \beta + \omega^1 \wedge \lambda = \mu \wedge \omega \]
for some other $1$-form $\mu$. 

We next take the derivative of  $d\alpha - i \bar \omega^1 \wedge \beta - 2i  \omega^1 \wedge \bar  \beta= \lambda \wedge \omega$, which modulo 
$\omega$ yields 
\[ \begin{aligned} 0&= 
 i d \bar \omega^1 \wedge \beta - i \bar \omega^1 \wedge d \beta  +2i d\omega^1 \wedge \bar \beta - 2i \omega^1 \wedge d\bar \beta - \lambda \wedge d \omega \\ 
 &= i \bar \omega^1 \wedge (\bar \alpha \wedge \beta - d \beta) + 2i \omega^1 \wedge (\alpha \wedge \bar \beta - d \bar \beta) 
  - i \lambda \wedge \omega^1 \wedge \bar \omega^1 \\ 
 &= -i  \bar \omega^1 \wedge ( \omega^1 \wedge \lambda) - 2i \omega^1 \wedge (\bar \omega^1 \wedge \bar \lambda)  
  - i \lambda \wedge \omega^1 \wedge \bar \omega^1  \\ 
 &= 2i\bar \omega^1 \wedge \omega^1 \wedge \bar \lambda.
\end{aligned} \]

It follows that we can write $\lambda = - \frac{\psi}{2} +  V \omega^1 -  \bar V \bar \omega^1+  a \omega$,  and we have a complete expression 
for $\lambda$, and therefore, for
\[ \Phi = d\alpha - i \bar \omega^1 \wedge \beta - 2i \omega^1 \wedge \bar \beta = 
\lambda\wedge \omega = -\frac{\psi}{2}\wedge \omega +  V \omega^1 \wedge \omega -  \bar V \bar \omega^1 \wedge \omega , \]
and the effect of the frame change
\[ \begin{aligned} 
   \tilde \beta & = \beta +  E \omega \\
   \tilde \psi &= \psi + G \omega + i ( \bar E \omega^1-  E \bar \omega^1)  
   \end{aligned}
   \]
yields $\tilde V = V - \frac{3i}{2} \bar E $. We can thus choose $\beta$ uniquely requiring that 
$V=0$
and now also have 
\begin{equation}\label{e:dalpha}
    d \alpha = i \bar \omega^1 \wedge \beta + 2i \omega^1 \wedge \bar \beta- \frac{\psi}{2}\wedge \omega
 \end{equation} 

Substituting (\ref{e:dalpha}) back into 
\eqref{e:dframe2} yields 
\[ (d\alpha - i \bar \omega^1 \wedge \beta)\wedge \omega^1 +  ( d \beta - \bar \alpha \wedge \beta ) \wedge \omega = \left( d \beta - \bar \alpha \wedge \beta + \frac{\psi}{2}\wedge \omega^1 \right) \wedge \omega , \]
so that 
\begin{equation}\label{e:defnu}
   d \beta - \bar \alpha \wedge \beta + \frac{\psi}{2}\wedge \omega^1= \nu\wedge \omega.
\end{equation}

Taking the derivative of $ d\varphi = i \omega^1 \wedge \bar \beta  -i  \bar \omega^1 \wedge \beta +  \omega \wedge \psi
  $ and using \eqref{e:frame} and \eqref{e:defnu}, we obtain 
\[ \left( d\psi - \varphi\wedge \psi - 2i \beta \wedge \bar \beta + i \omega^1 \wedge \bar \nu - i \bar \omega^1 \wedge \nu \right) \wedge \omega = 0.
\]
We can therefore write 
\begin{equation}\label{e:dpsi}
    \Psi = d\psi - \varphi\wedge\psi - 2i \beta\wedge \bar \beta = i \bar \omega^1\wedge \nu 
-i \omega^1 \wedge \bar \nu + \varrho \wedge \omega. 
\end{equation}

In the next (and last) step, we take the exterior derivative of \eqref{e:defnu}, obtaining after 
using \eqref{e:dalpha}, \eqref{e:defnu}, \eqref{e:dpsi}, and \eqref{e:frame}, computing modulo $\omega$:
\[
\begin{aligned}
0&= - d\bar \alpha \wedge \beta + \bar \alpha \wedge d \beta + \frac{d\psi}{2} \wedge \omega^1 
- \frac{\psi}{2}\wedge d \omega^1  - d \nu\wedge \omega + \nu\wedge d \omega \\ 
&= - i  \omega^1 \wedge \bar \beta \wedge \beta - \bar \alpha \wedge  \frac{\psi}{2}\wedge \omega^1 +\frac{1}{2} \left( \varphi\wedge\psi + 2i \beta\wedge \bar \beta + i \bar \omega^1\wedge \nu 
 \right) \wedge \omega^1 \\ 
 &\quad\quad - \frac{\psi}{2} \wedge \omega^1\wedge \alpha + \nu \wedge (i \omega^1 \wedge \bar \omega^1) \\ 
 & = -\frac{3i}{2} \nu \wedge\omega^1 \wedge \bar \omega^1 + \frac{1}{2} \underbrace{\left( \varphi - \alpha - \bar \alpha  \right)}_{=0} \wedge \psi\wedge \omega^1. 
\end{aligned}
 \]

Since $\nu$ is only defined modulo $\omega$, 
we can therefore write $ \nu = P \omega^1 + Q \bar \omega^1 $. It turns out that the exterior differentiation 
of \eqref{e:dalpha}, using the expressions for $d\beta$ and $d\psi$ already obtained, 
implies that $P=\bar P$ is real. Hence from \eqref{e:dpsi} we have

 \[ \Psi =  i  \bar \omega^1\wedge \nu 
-i \omega^1 \wedge \bar \nu + \varrho \wedge \omega = 2 i P  \bar \omega^1\wedge \omega^1 + 
  \varrho \wedge \omega.  \]
 The last free parameter $G$ in the frame change $\tilde \psi = \psi + G \omega$ transforms 
 $ \tilde P =  P + G $, and we finally have an invariantly defined frame as we wanted by 
 requiring $P=0$. Note that with this choice we have 
\begin{equation}
   \label{e:dbetafin}{} d \beta = \bar \alpha \wedge \beta - \frac{\psi}{2} \wedge \omega^1 + Q \bar \omega^1 \wedge \omega.
\end{equation}
We can also differentiate the equation for $\Psi$ above, and obtain from it that $\rho \wedge \omega^1 \wedge \bar \omega^1 = 0$, 
so that $\rho$ is a linear combination $R \omega^1 + S \bar \omega^1$, which by reality of $\Psi$ also implies $S = \bar R$. Hence we can write 
\begin{equation}
   \label{e:dpsifin} d \psi = \varphi \wedge \psi + 2 i \beta \wedge \bar \beta + (R \omega^1 + \bar R \bar \omega^1) \wedge \omega.
\end{equation}

\subsection{The canonical connection} 
\label{sub:the_canonical_connection}
Theorem~\ref{thm:intrinsic} has a neat description in terms of a canonical connection for $Y$, 
which we recall again using the notation of \cite{Chern:1974wu}. One sets 
\[ h = \begin{pmatrix}
   0 & 0 & -\frac{i}{2} \\[4pt] 
   0 & 1 & 0 \\[4pt] 
   \frac{i}{2} & 0 & 0 
\end{pmatrix}, \qquad \pi = \begin{pmatrix}
   - \frac{2\alpha + \bar \alpha}{3} &\omega^1  & 2 \omega \\[4pt] 
    - i \bar \beta& \frac{\alpha-\bar \alpha}{3} & 2 i \bar \omega^1 \\[4pt] 
   -\frac{\psi}{4} & \frac{\beta}{2} & \frac{2 \bar \alpha + \alpha}{3}
\end{pmatrix}, \]
so that $ \pi h  + h \pi^*  = 0$ and $\trace \pi = 0$, in other words, $\pi$ is $\mathfrak{su} (2,1)$-valued (with 
the hermitian form given by $h$). It turns out that the equations of Theorem~\ref{thm:intrinsic} are 
equivalent to
\[ d\pi - \pi \wedge \pi = \begin{pmatrix}
   0 & 0 & 0 \\[4pt] 
   -i Q \bar \omega^1 \wedge  \omega & 0 & 0 \\[4pt]
   -\frac{1}{4}(R \omega^1 + \bar R \bar \omega^1) \wedge \omega & \frac{1}{2} \bar Q  \omega^1 \wedge \omega & 0
\end{pmatrix} \]


\subsection{The Chern-Moser normal form and chains} 
It is well known that the group of germs of biholomorphisms $G = \Aut (\mathbb{H}^2,0)$ of the 
Heisenberg hypersurface $\mathbb{H}^2 \subset \C^2_{{z_2},{z_1}}$ (defined by $\real {z_1} = |{z_2}|^2$)
fixing the origin are explicitly given by 
\begin{equation}\label{e:autos}
\begin{aligned} H({z_1},{z_2})&= (g(z_1,z_2),f(z_1, z_2 )) \\ &= \left(  
 \frac{|\lambda|^2{z_1}}{{1 + 2  \bar a {z_2} +  ( |a|^2 +  i t) {z_1}}} ,
 \frac{ \lambda({z_2} + a{z_1})}{1 + 2  \bar a {z_2} +  ( |a|^2 +  i t) {z_1}}  \right).  \end{aligned}\end{equation}
They are therefore uniquely determined by the derivatives $(f_{z_2}(0),f_{z_1}(0),\imag g_{z_1^2} (0)) \in \C^* \times \C \times \R$. 
Using this, we identify $G$ with $\C^* \times\C\times\R$. 

Next, we recall the celebrated Chern-Moser theorem \cite{Chern:1974wu}. 
If we   consider a germ of a strictly 
pseudoconvex real-analytic hypersurface $(M,p) \subset (\C^2,p)$, then after 
an affine change of coordinates,  $p=0$ and $M$ is given 
near $p$  by an equation of the form
\[ \real {z_1} = |{z_2}|^2 + \varphi ({z_2}, \bar {z_2}, \imag {z_1}) = |{z_2}|^2 + \sum_{\alpha,\bar \beta} \varphi_{\alpha,\bar \beta} (\imag z_1) {z_2}^\alpha \bar {z_2}^{\bar \beta}.  \]
The Chern-Moser normal form imposes conditions on the $\varphi_{\alpha, \bar \beta}$ which make this coordinate choice unique up to 
a parameter $\Lambda \in G$  in the isotropy group of $\mathbb{H}_2$:  

\begin{theorem}[Chern-Moser \cite{Chern:1974wu}, n=2]  Let $(M,p)$ be a real-analytic hypersurface. Then 
there exists a holomorphic 
choice of holomorphic coordinates $(z,w)$ 
in which $p=0$ and the equation of $M$ satisfies the {\em normalization conditions}
\begin{equation*}
  \label{e:normal} \varphi_{\alpha, \bar \beta} (\imag z_1 ) = 0, \text{ if } 
   \min(\alpha,\bar \beta) \leq 1 \text{ or } (\alpha, \beta) \in \left\{ (1,1), (2,2), (3,3), (2,3),(3,2) \right\}.
\end{equation*}
Any other choice $(\tilde z, \tilde w)$ of holomorphic coordinates in which the defining equation of $M$ takes this form 
is given by $(\tilde z, \tilde w) = H_\Lambda (z,w)$ for some $\Lambda \in G$,  with  $H_\Lambda$ uniquely determined by  
the requirement that it agrees with the map in \eqref{e:autos} up to order two. 
\end{theorem}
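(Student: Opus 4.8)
The plan is to produce the normalizing coordinates by a formal power series normalization organized by weight, and then to promote the formal result to a convergent one. I assign weights $\wt(z_2)=1$ and $\wt(z_1)=2$, so that the model Heisenberg equation $\real z_1=|z_2|^2$ is weighted homogeneous of weight $2$, and I expand the defining function of $M$ as $\real z_1 = |z_2|^2 + \sum_{m\geq 3}\varphi^{(m)}$ into weighted-homogeneous pieces $\varphi^{(m)}=\sum_{\alpha+\beta+2k=m}\varphi_{\alpha\bar\beta,k}\,z_2^{\alpha}\bar z_2^{\beta}(\imag z_1)^k$. I would then normalize one weight at a time: assuming the equation is already in normal form through weight $m-1$, I look for a biholomorphism which is a correction $(z_2,z_1)\mapsto(z_2+f,\,z_1+g)$ of the identity, with $f,g$ chosen so that their leading contribution affects only the weight-$m$ terms of $\varphi$ and leaves the already normalized lower-weight terms untouched. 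The residual freedom of the isotropy group $G$ enters only through the lowest-order ($\wt\leq 2$) part of the transformation, which I may fix arbitrarily (e.g.\ as the identity) for the existence statement.

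The heart of the argument is linear-algebraic. Substituting the correction into the defining equation and collecting the weight-$m$ part shows that the weight-$m$ coefficients transform affinely,
\[ \varphi^{(m)}\longmapsto \varphi^{(m)} + L(f,g), \qquad L(f,g)=\real\bigl(g-2\bar z_2\,f\bigr)\big|_{M_0}, \]
where $M_0=\{\real z_1=|z_2|^2\}$ is the model and $L$ denotes the principal part of the induced linear operator, the remaining terms contributing only at higher weight. I would analyze $L$ degree by degree in $z_2,\bar z_2$ (expanding the coefficient functions in $\imag z_1$): because $f,g$ are holomorphic, the monomials they can produce are severely constrained, and one checks that the image of $L$ is exactly the space of weight-$m$ terms with $\min(\alpha,\beta)\leq 1$, together with the ``trace parts'' at a few exceptional bidegrees. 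Establishing that the normalization subspace $\mathcal N_m$ cut out by the stated conditions is a genuine complement to $\mathrm{Image}(L)$, so that $\varphi^{(m)}$ can be moved into $\mathcal N_m$ by an admissible choice of $(f,g)$ that is unique modulo $\ker L$, is the main step.

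The special bidegrees $(2,2),(3,3),(2,3),(3,2)$ are precisely those at which $L$ fails to be surjective in the relevant slot: there the holomorphic data $(f,g)$ produce only the trace-free part, so a residual cokernel survives and forces the extra conditions $\varphi_{22}=\varphi_{33}=\varphi_{23}=\varphi_{32}=0$; this is exactly where the repeated traces of the one-variable Levi form enter. Dually, the part of $\ker L$ that preserves the full normalization is a finite-dimensional space which I would identify with the Lie algebra of $G\cong\C^\ast\times\C\times\R$. Integrating this residual freedom recovers the isotropy group, and a direct comparison of the terms of weight $\leq 2$ shows that the ambiguity is realized precisely by the maps $H_\Lambda$ that agree with \eqref{e:autos} up to order two, which gives the uniqueness statement.

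I expect the main obstacle to be the resonance analysis of the third paragraph, namely computing the cokernel of $L$ at the finitely many exceptional bidegrees and matching it exactly to the listed conditions, together with the identification of the normalization-preserving kernel with the Lie algebra of $G$; this is the delicate combinatorial core of the theorem. A secondary point, which I would treat last, is convergence: the construction above is a priori only formal, so for the real-analytic hypersurface $M$ one must show that the resulting normalizing series converges, for instance by a majorant estimate controlling the size of the weight-$m$ correction, so that the normalizing coordinates are genuinely holomorphic rather than merely formal.
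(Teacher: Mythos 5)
The paper itself offers no proof of this theorem --- it is quoted from Chern--Moser --- so your sketch has to be measured against the classical argument it is reconstructing. Its skeleton is the right one: the weighted grading, the correction maps, the linearized operator $L(f,g)=\real\bigl(g-2\bar z_2 f\bigr)\big|_{M_0}$, the normal-form space as a complement of the image of $L$, and the kernel of $L$ as the infinitesimal isotropy are exactly the ingredients of the Chern--Moser formal theory. However, your third paragraph states the key combinatorial fact backwards, in a way that contradicts your second paragraph. In $\C^2$ the exceptional bidegrees $(2,2),(2,3),(3,2),(3,3)$ are precisely the slots beyond $\min(\alpha,\beta)\le 1$ where $L$ \emph{is} surjective: restricting holomorphic monomials to the model $z_1=|z_2|^2+i\imag z_1$, one checks that $g=az_1^2$ and $f=bz_2z_1$ hit bidegree $(2,2)$, $f=cz_1^2$ hits $(2,3)$ and $(3,2)$, and $g=dz_1^3$ hits $(3,3)$; this is exactly why the normal form may demand that these four coefficient functions vanish identically. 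There is no surviving cokernel at these bidegrees, and in one complex variable there is no nontrivial trace-free part at all --- every such biform is pure trace, which is how the higher-dimensional conditions $\operatorname{tr}\varphi_{2\bar 2}=\operatorname{tr}^2\varphi_{2\bar 3}=\operatorname{tr}^3\varphi_{3\bar 3}=0$ collapse to outright vanishing when $n=2$. If the picture were as in your third paragraph (image equal to the trace-free part, a residual cokernel ``forcing'' the conditions), then the stated normalization $\varphi_{22}=\varphi_{23}=\varphi_{32}=\varphi_{33}=0$ could not be achieved at all, and your core lemma would produce the wrong normal-form space.

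The second genuine gap is convergence. You relegate it to ``a majorant estimate controlling the size of the weight-$m$ correction,'' but this is the hard half of the theorem: the statement asserts \emph{holomorphic} normalizing coordinates, and a formal normal form is strictly weaker. No direct majorant proof of convergence of the recursive scheme is known --- the scheme composes infinitely many corrections, each obtained by inverting $L$ on a weight space, and controlling the composite series is precisely the difficulty. Moser's actual argument takes a different route: the conditions with $\min(\alpha,\beta)\le 1$ (and the frame/parametrization normalizations) are achieved by convergent transformations adapted to an arbitrary analytic curve transverse to $T^c_pM$, and the remaining conditions at the exceptional bidegrees translate into a system of \emph{analytic ODEs} for that curve, its frame, and its parametrization --- the solutions being the chains --- so convergence follows from analytic dependence in ODE theory rather than from series estimates. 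Relatedly, the uniqueness clause needs the same weight-by-weight induction run for maps rather than for defining equations: one must show that a normal-form-preserving transformation is uniquely determined by its $2$-jet and that every $2$-jet coming from $G$ occurs; your ``direct comparison of the terms of weight $\le 2$'' is the start of that argument, not the whole of it.
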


The lowest order term in the defining equation $\varphi$ of $M$ in normal coordinates
which is not necessarily vanishing is therefore of the form 
 $A_p z_2^2 \bar z_2^4 + \bar A_p z_2^4 \bar z_2^2$. The number $A$ transforms nicely under 
 changes of normal coordinates and is called {\em Cartan's cubic tensor}. 
 It already appears in Cartan's early work \cite{Cartan:1932ws,Cartan:1933ux}, 
 and it being $0$ is a biholomorphic invariant; points where $A_p = 0$ are called {\em umbilical points}. 
 For ease of notation later on, we always normalize $A_p$ to
 be real. Vanishing of $A_p$ on an open subset is equivalent to local sphericity: 
 
\begin{theorem}[Cartan's umbilical tensor \cite{Cartan:1932ws,Cartan:1933ux}]  Let $(M,p)\subset \C^2$ be 
a germ of a smooth strictly pseudoconvex hypersurface. If $A_q =0 $ for $q$ in a neighbourhood 
of $p$ in $M$, then $M$ is near $p$ CR-equivalent to $\mathbb{H}^2$. 
\end{theorem}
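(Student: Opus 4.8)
\section*{Proof proposal}

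The plan is to reinterpret the statement through the canonical Cartan connection $\pi$ of \cref{sub:the_canonical_connection} and to show that the hypothesis $A_q\equiv 0$ forces this connection to be \emph{flat}; local sphericity then follows from the uniqueness of flat Cartan geometries. The first point to record is that Cartan's cubic tensor $A$ and the invariant function $Q$ of Theorem~\ref{thm:intrinsic} carry the same information: tracing the normalizations that fix the canonical frame, the intrinsically defined $Q$ agrees, up to a fixed nonvanishing factor, with the normal-form coefficient $A_p$ of the lowest order term $A_p z_2^2\bar z_2^4 + \bar A_p z_2^4\bar z_2^2$. Both are the primary curvature of the geometry, so the vanishing locus of $Q$ on $Y$ is exactly the preimage of the umbilical locus $\{A=0\}$ on $M$, and the hypothesis translates into $Q\equiv 0$ on the relevant part of $Y$.

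Next I would propagate the vanishing of $Q$ to the second invariant $R$. The curvature computed in \cref{sub:the_canonical_connection} is
\[ d\pi - \pi\wedge\pi = \begin{pmatrix} 0 & 0 & 0 \\ -iQ\,\bar\omega^1\wedge\omega & 0 & 0 \\ -\tfrac14(R\,\omega^1 + \bar R\,\bar\omega^1)\wedge\omega & \tfrac12\bar Q\,\omega^1\wedge\omega & 0 \end{pmatrix}, \]
so only $Q$ and $R$ obstruct flatness, and these two are not independent. Differentiating the structure equation \eqref{e:dbetafin} for $d\beta$ and imposing $d^2\beta=0$, while substituting \eqref{e:dalpha}, \eqref{e:dpsifin}, and \eqref{e:frame}, produces a Bianchi identity in which the $\psi$-curvature coefficient $R$ of \eqref{e:dpsifin} is expressed as a constant multiple of a coframe derivative of $Q$. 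Hence on the open set where $Q\equiv 0$ one has $dQ=0$, which forces $R\equiv 0$ as well, and the entire curvature matrix vanishes: $d\pi = \pi\wedge\pi$.

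It remains to integrate the now-flat connection. Since $\pi$ is an $\mathfrak{su}(2,1)$-valued coframing of $Y$ satisfying the Maurer--Cartan equation $d\pi=\pi\wedge\pi$, the Cartan--Darboux theorem (the fundamental theorem on $\mathfrak g$-valued $1$-forms, see e.g.\ \cite{MR1067341}) provides, near any point of $Y$, a local diffeomorphism $F\colon Y\to SU(2,1)$ with $F^\ast\theta=\pi$, where $\theta$ is the Maurer--Cartan form. The group $SU(2,1)$ equipped with $\theta$ is precisely the canonical bundle of the flat model $\mathbb{H}^2$ (for which $Q$ and $R$ vanish identically), so $F$ identifies the canonical geometry of $Y$ with that of the model. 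Because the CR structure of $M$ is encoded in $Y$ by the contact distribution $\omega^1=0$ together with the splitting of its complexification into the $\omega^1$- and $\bar\omega^1$-lines, this matching of canonical forms descends to the bases and yields a CR-diffeomorphism of a neighbourhood of $p$ in $M$ onto an open subset of $\mathbb{H}^2$.

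I expect the difficulty to be twofold. The computational heart is the Bianchi identity tying $R$ to $Q$: one must differentiate \eqref{e:dbetafin} carefully, reduce modulo $\omega$, and read off that no curvature survives once $Q$ vanishes, so that the single invariant $A$ really controls the whole geometry in $\C^2$. The more conceptual point is the final descent: one has to verify that the isomorphism produced by Cartan--Darboux is equivariant for the $G_1$-actions, so that it factors through the projections of $Y$ and $SU(2,1)$ to $M$ and $\mathbb{H}^2$ rather than giving only an equivalence of the total spaces. Once equivariance is checked, the induced map on the base is automatically CR, since it preserves the tautological forms $\omega,\omega^1$ that define the CR structure.
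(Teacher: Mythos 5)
The paper does not actually prove this statement: it is quoted as Cartan's classical theorem, with the proof delegated to the citations \cite{Cartan:1932ws,Cartan:1933ux}, so there is no in-paper argument to compare yours against. What you have written is a reconstruction of the classical Cartan-geometric proof, and it is correct in outline; it is also precisely the route that the paper's preliminaries are set up to support, namely the structure equations of Theorem~\ref{thm:intrinsic} and the $\mathfrak{su}(2,1)$-valued connection $\pi$ of \cref{sub:the_canonical_connection}, whose curvature involves only $Q$ and $R$. Two of your steps carry the real mathematical weight and would need to be discharged rather than asserted. First, the identification of the umbilical locus with $\{Q=0\}$: this holds because $Q$ is a relative invariant on $Y$ (along each fibre it transforms by a nonvanishing character of the structure group), and over a point $p$ it is a nonzero multiple of the normal-form coefficient $A_p$; establishing that multiple requires matching the intrinsic bundle construction with the extrinsic normal-form construction, which is carried out in Chern--Moser \cite{Chern:1974wu} and should be cited or reproduced. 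Second, the Bianchi identity: differentiating \eqref{e:dbetafin} and reducing with \eqref{e:dalpha}, \eqref{e:dpsifin} and \eqref{e:frame} does express $R$ through coframe derivatives of $Q$ (together with terms proportional to $Q$ itself), so $Q\equiv 0$ on an open set forces $R\equiv 0$ and hence $d\pi=\pi\wedge\pi$; this is a known computation but it is the heart of the matter in $\C^2$, where $R$ is not an independent invariant. On your final worry about descent: equivariance is not the most economical way to phrase it. Since the vertical distribution of $Y\to M$ is exactly the common kernel of $\omega,\omega^1,\bar\omega^1$, and the Cartan--Darboux map $F$ pulls the corresponding model forms back to these, $F$ automatically takes (local pieces of) fibres to fibres, hence descends locally to a map $M\to\mathbb{H}^2$; that map preserves the contact distribution $\{\omega=0\}$ and the complex structure on it encoded by $\omega^1$, so it is CR. Modulo supplying the two computations above, your proposal is a sound proof of the cited theorem.
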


The theorem actually hods true for a 
  $\diffable{6}$ hypersurface, for which we can still define the cubic tensor, and can even (due to recent not yet published work of Kossovskiy) 
  be formulated in lower regularity. 

$A_p$ can be thought of as a form of intrinsic curvature, and in a similar vein, 
Chern and Moser used their normal form to introduce the notion of 
{\em chains}
as replacements for geodesics in Riemannian geometry. For each 
$\Lambda = (\lambda, a, t) \in G$, we obtain a
 parametrized curve (defined for $|s|$ small 
enough)
\[ \gamma(s) = H_\Lambda (is,0).  \]
If one disregards the parametrization of $\gamma$, then it turns out that 
the condition $\varphi_{2,\bar 3} = \varphi_{3, \bar 2}= 0$ is a {\em second order ODE}
whose solution is unique given $a$ (which one thinks of as a vector transverse to the complex tangent space $T^c_0 M$). 
The rest of the data in $\Lambda$ geometrically corresponds to a choice of frame of $T^c_0 M$ and a choice of parametrization of  
$\gamma$
amongst a family of projectively equivalent ones.  The second order differential 
equations for chains are not easy to compute from a defining equation of $M$. For 
boundaries of strictly pseudoconvex domains, the best way to get a computational
handle on chains for our problem turned out to be their interpretation as projections of light rays 
of an associated Lorentz metric introduced by Fefferman \cite{Fefferman:1976wl} which 
we discuss in the next section. 

\subsection{Chains and the Fefferman Hamiltonian}

We will now recall the Fefferman metric for a  strictly pseudoconvex hypersurface 
$M=\{ \rho= 0\}\subset \C^2$. We write $z_j=x_j+iy_j$, $j=1,2$, and we assume that $(y_1,x_2,y_2)$ are local coordinates on $M$ near the origin, 
which we assume to be defined by
\[ \rho (x_1,y_1,x_2, y_2) = x_1 - (x_2^2 + y_2^2) - \varphi (y_1,x_2,y_2),  \]
where $\varphi$ vanishes to order at least $3$.

The constructive appeal of Fefferman's metric is based on the fact that for 
the complex Monge-Amp\`ere operator
$$ J(\rho)= \det \left(\begin{matrix}
\rho & \rho_{\overline z_1}&   \rho_{\overline z_2}     \\
 \rho_{ z_1}& \rho_{z_1 \overline z_1} & \rho_{z_1 \overline z_2} \\
\rho_{ z_2} &  \rho_{z_2 \overline z_1} &   \rho_{z_2 \overline z_2}  \\
\end{matrix}\right),$$
one can construct approximate solutions $\rho^{(k)}$ to the equation 
$J(\rho^{(k)}) = 1 + O(\rho^{k+1})$ in an iterative way, in this particular case by 
\[ \rho^{(1)} = \frac{\rho}{\sqrt[3]{J(\rho)}}, \quad \rho^{(2)} = \rho^{(1)}
\left(\frac{5-J(\rho^{(1)})}{4} \right).  \]

The Fefferman metric is defined on a circle bundle over $M$. 
We denote by $(x_0,y_1,x_2,y_2)$ the coordinates on $\mathbb{S}^1\times M$. 
 The conjugate momenta will be denoted by $p_{x_0}, p_{y_1}, p_{x_2}$ and $p_{y_2}$. 
There is a lot of flexibility in which metric is actually used, because 
the light rays of conformally equivalent Lorentz metrics are the same. 
The one defined in \cite{Fefferman:1976wl} is 
\[ ds^2 = - \frac{i}{3} \left( \partial \rho^{(2)} - \bar\partial \rho^{(2)} \right)d x_0
+ \sum_{j,k=1}^2 \dopt{^2\rho^{{(2)}}}{z_j \bar z_k} dz_j d \bar z_k.   \]

Setting 
\begin{equation*}
\begin{aligned}
\Phi &= J(\rho), \\ 
A^{}&=  \begin{pmatrix}
0 & i\rho_{\overline z_1}&  i \rho_{\overline z_2}     \\
-i \rho_{ z_1}& 3 \rho_{z_1 \overline z_1} & 3 \rho_{z_1 \overline z_2} \\
-i \rho_{ z_2} &3 \rho_{z_2 \overline z_1} & 3 \rho_{z_2 \overline z_2}  \\
\end{pmatrix}, \\ P&=(p_{x_0},ip_{y_1},p_{x_2}+ip_{y_2}) \\
\overline{\partial}\Phi&=\left(0,\Phi_{\overline z_1},\Phi_{\overline{z}_2}\right) \\ 
\tilde{\Phi} &=\left(3\Phi_{j\overline{k}}-\frac{5}{\Phi}\Phi_j\Phi_{\overline{k}}\right)_{j,k},
\end{aligned}
\end{equation*}
the 
Hamiltonian of Fefferman's metric is now given by
\begin{equation}\label{eqH}
H= P A^{-1} P^*
-\frac{2p_{x_0}}{\Phi}
\imag \left( \overline{\partial}\Phi \cdot A^{-1}\cdot P^*
\right)
-\frac{p_{x_0}^2}{2\Phi} \Tr \left(\tilde{\Phi} A^{-1}\right),
\end{equation}
where $\Tr (\tilde{\Phi} A^{-1})$ stands for the trace of the matrix $\tilde{\Phi} A^{-1}$. {Note that the formula of the Hamiltonian in \cite[p. 410]{Fefferman:1976wl} contains a minor sign mistake, see \cite{MR407321}.}
Writing $x = (x_0, y_1,x_2,y_2)$, and $p = (p_{x_0}, p_{y_1}, p_{x_2}, p_{y_2})$,  
chains are the projections on $M$ of the solutions of the Hamiltonian system
\begin{equation}\label{eqHsys}
H(x,p) = 0, \quad x' = H_p (x,p), \quad p' = - H_x (x,p).
\end{equation}
We are now ready to discuss a basic example.
\begin{example}\label{exsphere}
In the case of the sphere $2\real z_1=|z_2|^2$, 
$$A^{-1}=
 \left(\begin{matrix}
0 & i&  0     \\
 -i & -\frac{|z_2|^2}{3}  & -\frac{z_2}{3}  \\
0 &  -\frac{\overline{z_2}}{3}  &  -\frac{1}{3}  \\
\end{matrix}\right).$$
For convenience, and since light rays for $H$ or $3H$ are the same, we consider 
   $$A^{-1}=
 \left(\begin{matrix}
0 & 3i&  0     \\
-3i & -|z_2|^2  & -z_2  \\
0 &  -\overline{z_2}  &  -1  \\
\end{matrix}\right).$$
In that case the Fefferman Hamiltonian is given by 
\begin{equation}\label{eqsphere}
H= P A^{-1} P^*
=6p_{x_0}p_{y_1}-|z_2|^2p^2_{y_1}+2y_2p_{y_1}p_{x_2}-2x_2p_{y_1}p_{y_2}-p^2_{x_2}-p^2_{y_2}.
\end{equation}
Now, we seek solution curves $\left(x_0(t),y_1(t),x_2(t),y_2(t), p_{x_0}(t), p_{y_1}(t), p_{x_2}(t),p_{y_2}(t)\right)$ to the Hamiltonian system \eqref{eqHsys}, which written out is given by
\begin{equation*}
\left\{\begin{aligned}
0&=6p_{x_0}p_{y_1}-|z_2|^2p^2_{y_1}+2y_2p_{y_1}p_{x_2}-2x_2p_{y_1}p_{y_2}-p^2_{x_2}-p^2_{y_2}\\
x_0'&= 6p_{y_1}\\
y_1'&=6p_{x_0}-2|z_2|^2p_{y_1}+2y_2p_{x_2}-2x_2p_{y_2}\\
x_2'&=2y_2p_{y_1}-2p_{x_2}\\
y_2'&=-2x_2p_{y_1}-2p_{y_2}\\
p_{x_0}'&=0\\
p_{y_1}'&=0\\
p_{x_2}'&=2x_2p^2_{y_1}+2p_{y_1}p_{y_2}\\
p_{y_2}'&=2y_2p^2_{y_1}-2p_{y_1}p_{x_2}\\
\end{aligned}\right.
\end{equation*}
To solve this system, note that we have  $p_{y_1}={\frac{c}{4}}$ for some $c$  so that we get
$x_2''=cy_2'$ and $y_2''=-cx_2'$, and therefore $z_2 = c_1 e^{-i c t} + c_2$ for some $c_1, c_2 \in \C$. It 
remains to determine $y_1$. Next we note that the quantity $y_2 p_{x_2} - x_2 p_{y_2}$ is conserved and solving one sees that the chains are the curves of the form
$$
\begin{aligned}
z_1(t) & =  \frac{1}{2}|c_1e^{-ict}+c_2|^2+i\left(\tilde{c_1}t +\tilde{c_2}\cos(ct)+\tilde{c_3}\sin(ct)+\tilde{c_4}\right)\\
z_2(t) & =  c_1e^{-ict}+c_2,
\end{aligned}
$$
where $\tilde{c_j} \in \R$ and $c_j \in \C$.   
\end{example}

We will now assign weights to all variables in the following way. The usual anisotropic scaling on $\C^2$, $\Lambda_\delta: (z_1,z_2) \mapsto (\delta z_1,\delta^2 z_2)$, $\delta>0$,  lifts to the cotangent bundle as 
$$\tilde{\Lambda}_\delta: (z_1,z_2, p_{z_1},p_{z_2}) \mapsto (\delta z_1,\delta^2 z_2,\delta^{-1}p_{z_1},\delta^{-2}p_{z_2}).$$
This leads to assign the respective natural weights $1,2,-1,-2$ and $-2$  to the variables $z_1,z_2, p_{y_1},p_{x_2}$ and $p_{y_2}$. The variables $x_0$ and $p_{x_0}$ both carry a weight $0$. However, with this convention, the Hamiltonian for the sphere \eqref{eqsphere} is homogeneous of degree $-2$. It will be more convenient for us if the Hamiltonian \eqref{eqsphere} is homogeneous of degree $2$, and so we shift the weights of the momenta by $2$. To summarize, we assign the following weights 
%
\begin{equation}\label{eqweight}
\begin{aligned}
\wt x_0 = 0 , \quad \wt y_1 = 2 , \quad \wt x_2 = \wt y_2 = \wt z_2 = 1,\\
\wt p_{x_0} = 2 , \quad \wt p_{y_1} = 0 , \quad \wt p_{x_2} = \wt p_{y_2} = 1.
\end{aligned}
\end{equation}

\subsection{Stationary discs}

We recall that a holomorphic disc 
$f = (g,h)$, with $g,h \in \mathcal{O}(\Delta) \cap C(\overline{\Delta})$ 
is said to be {\em attached} to $M$ if $f(b \Delta) \subset M$. We recall that 
the complex tangent space of $M$ at $p$ is given by $T^c_p M = T_p M \cap i T_p M$ and denote the 
conormal bundle of $M$ by $\mathcal{N}M \subset  T^* M $, defined by 
\[ \mathcal{N}_p M = \left\{ \theta_p \in T^*_p M \colon \theta_p (X_p ) = 0, \, X_p \in T_p^c M \right\}.  \]  An attached
disc $f=(g,h)$ is said to be {\em stationary} if it has a lift $(f,\tilde f)$ attached to $\mathcal{N} M$  which
is holomorphic up to a pole of order at most $1$ in $\Delta$. If $M=\partial \Omega$ 
is the boundary of a strictly pseudoconvex domain, stationarity is related to the Euler-Lagrange equations 
for extremal discs for the Kobayashi metric. 

In term of equations, we can use the fact that $\mathcal{N}_p M$ is spanned by $\varrho_z = (\dopt{\rho}{z_1},\dopt{\rho}{z_2})$ to 
express the fact that $f$ is stationary in the following form: $f$ is stationary if and only if  
there exists a real-valued positive function $a$ on $b\Delta$ such 
that the map $\tilde f = (\tilde g, \tilde h)$ defined by 
\begin{equation}\label{e:stat0} \begin{aligned}
\tilde g (\zeta) & = \zeta a (\zeta) \dopt{\rho}{z_1} \left( f(\zeta) , \overline{ f(\zeta )} \right) \\
\tilde h (\zeta) & = \zeta a (\zeta) \dopt{\rho}{z_2} \left( f(\zeta) , \overline{ f(\zeta )} \right),  
\end{aligned} 
\end{equation}
for  $\zeta \in b\Delta$, extends holomorphically to $\Delta$. 
To deal with this extension property, we will use  the well known fact (see \cite{be-de-la} for a proof) that a continuous function $\varphi:b \Omega\to\mathbb C$ defined on the smooth boundary of a simply connected domain $\Omega$ extends holomorphically to $\Omega$ if and only if it satisfies the \emph{moment conditions}
\begin{equation}\label{eqmom}
\int_{b\Omega} \zeta^m \varphi (\zeta) d\zeta =0 \ \ \mbox{ for all } m\geq 0.
\end{equation}

It turns out that if $M$ is strictly pseudoconvex, then its conormal bundle is 
actually totally real  \cite{webster}, and so the attachment of stationary discs turns into a 
standard Riemann-Hilbert problem \cite{lempert, forstneric, globevnik1, globevnik2}. In the case 
of the model hypersurface $2\real z_1 = |z_2|^2$, a typical stationary 
disc $f$ passing through $0$ at $1$ (i.e. $f(1)=0$) is $f(\zeta) = (1-\zeta, 1-\zeta)$ and its lifts are given by $(1-\zeta, 1-\zeta, a \zeta , a (\zeta - 1)), a \in \R$. 

The boundary traces of stationary discs are preserved under 
(local) CR diffeomorphisms in the following sense. In 
the case of a strictly pseudoconvex hypersurface $M$, every CR function on $M$
extends to the pseudoconvex side of $M$. Since the components of a CR map $H$ are 
CR functions, the map actually extends as a holomorphic map to the pseudoconvex side of $M$. Therefore, for
a small enough stationary disc attached to $M$, the disc $H\circ f$ is attached
to $H(M)$ and is stationary (this is obvious from the characterization as lifts, or one can use the defining equation
$\tilde \rho = \rho \circ H^{-1}$ in \eqref{e:stat0}). 

\section{The Fefferman Hamiltonian in normal form}\label{sec:Ham}
\subsection{The model case}

Consider a strictly pseudoconvex hypersurface of the form 
$$M=\{2 \real z_1=Q(z_2,\overline{z_2})\}\subset \C^2.$$
Such hypersurfaces (whose defining equations do not depend on $\imag z_1$) are 
called {\em rigid}; the $1$-parameter group of transformations 
$z_1 \mapsto z_1 + it$, $t\in \R$, yields a cyclic variable for the 
Hamiltonian \eqref{eqH}. 
 
 As before, we write $z_j=x_j+iy_j$, $j=1,2$ and use $(x_0,y_1,x_2, y_2)$ as
 variables on $\mathbb{S}^1\times M$. We also write
$z_0=e^{i\theta}$ and $x_0=\theta$.     
We now consider  the defining equation 
$$\rho=2 \real z_1-Q(z_2,\overline{z_2}).$$
 {
 We have 
 $$ \Phi=
  \det \left(\begin{matrix}
 \rho & 1 &   -Q_{\overline z_2}     \\
 1 & 0& 0 \\
 -Q_{ z_2} &  0 &  -Q_{z_2 \overline z_2}  \\
 \end{matrix}\right)= Q_{z_2 \overline z_2}$$
and
 \begin{eqnarray*}
 A^{-1}&=&  \left(\begin{matrix}
 0 & i&  -iQ_{\overline z_2}     \\
  -i & 0 & 0 \\
 iQ_{ z_2} &  0 &  -3 Q_{z_2 \overline z_2}  \\
 \end{matrix}\right)^{-1}
 =
  \left(\begin{matrix}
 0 & i&  0     \\
  -i & -\frac{|Q_{\overline z_2}|^2}{3 Q_{z_2 \overline z_2}}  & -\frac{Q_{\overline z_2}}{3 Q_{z_2 \overline z_2}}  \\
 0 &  -\frac{Q_{z_2}}{3 Q_{z_2 \overline z_2}}  &  -\frac{1}{3 Q_{z_2 \overline z_2}}  \\
 \end{matrix}\right).
 \end{eqnarray*}
} 
{Moreover following \eqref{eqH}, we have 
 $$\sum_{l\geq 1, k\geq 0}\Phi_{\overline z_l}A^{lk}(p_{x_k}-ip_{y_k})=\sum_{k\geq 0}\Phi_{\overline z_2}A^{2k}(p_{x_k}-ip_{y_k})$$
 and since $Q$ is independent of $z_1$ we also have 
 \begin{eqnarray*}
 \tilde{\Phi} A^{-1}&=&
 \left(\begin{matrix}
 0 & 0&  0   \\
 0& \frac{(n+1)}{\Phi}\Phi_{z_1\overline z_1}-\frac{2n+1}{\Phi^2}\Phi_{z_1}\Phi_{\overline z_1}  & \frac{(n+1)}{\Phi}\Phi_{z_1\overline z_2}-\frac{2n+1}{\Phi^2}\Phi_{z_1}\Phi_{\overline z_2} \\
 0& \frac{(n+1)}{\Phi}\Phi_{z_2\overline z_1}-\frac{2n+1}{\Phi^2}\Phi_{z_2}\Phi_{\overline z_1}  &  
 \frac{(n+1)}{\Phi}\Phi_{z_2\overline z_2}-\frac{2n+1}{\Phi^2}\Phi_{z_2}\Phi_{\overline z_2} \\
 \end{matrix}\right)\\
 \
 &=&\left(\begin{matrix}
 0 & 0&  0   \\
 0 & 0&  0   \\
 0& 0  &  
 \frac{3}{\Phi}\Phi_{z_2\overline z_2}-\frac{5}{\Phi^{{2}}}\Phi_{z_2}\Phi_{\overline z_2} \\
 \end{matrix}\right)\\
 \end{eqnarray*}
 }
 Thus the Fefferman Hamiltonian  is computed to be 
\begin{equation}\label{eqHrigid}
\begin{aligned}
H = & \  P A^{-1} P^*
-\frac{2}{3}\frac{p_{x_0}}{(Q_{z_2 \overline z_2})^2}
\imag \left(Q_{z_2 \overline z_2^2}\left(iQ_{z_2}p_{y_1}{-}p_{x_2}{+}ip_{y_2}\right)\right)\\
 & {+}\frac{p_{x_0}^2}{{6}(Q_{z_2 \overline z_2})^2}\left(3Q_{z_2^2 \overline z_2^2}-5\frac{Q_{z_2^2 \overline z_2}Q_{z_2 \overline z_2^2}}{Q_{z_2 \overline z_2}}\right).
\end{aligned}
\end{equation}
 Using the notations $p_{z_2} = p_{x_2}+ip_{y_2}$ and $p_{\overline z_2} = \overline{p_{z_2}} = p_{x_2}-ip_{y_2}$, the expression involving $A$ in \eqref{eqHrigid} is 
 explicitly given by 
\[ \begin{aligned} P A^{-1} P^*
  &=(p_{x_0},ip_{y_1},p_{z_2})
    \begin{pmatrix}
       0 & i&  0     \\
  -i & -\frac{|Q_{\overline z_2}|^2}{3 Q_{z_2 \overline z_2}}  & -\frac{Q_{\overline z_2}}{3 Q_{z_2 \overline z_2}}  \\
 0 &  -\frac{Q_{z_2}}{3 Q_{z_2 \overline z_2}}  &  -\frac{1}{3 Q_{z_2 \overline z_2}}  
    \end{pmatrix}
    \begin{pmatrix}
      p_{x_0} \\ - ip_{y_1} \\ p_{\bar z_2}
    \end{pmatrix}\\
&=
    2p_{x_0}p_{y_1} -\frac{|Q_{\overline z_2}|^2}{3 Q_{z_2 \overline z_2}} p_{y_1}^2 + i\frac{Q_{z_2}}{3 Q_{z_2 \overline z_2}}p_{y_1}p_{z_2}-i \frac{Q_{\overline z_2}}{3 Q_{z_2 \overline z_2}} p_{y_1}p_{\overline z_2}  - \frac{1}{3 Q_{z_2 \overline z_2}} |p_{z_2}|^2.
  \end{aligned}\]
Adding the rest of the Hamiltonian, we get
\[\begin{aligned}
H=& \left( \frac{1}{2}\frac{Q_{z_2^2 \overline z_2^2}}{Q^2_{z_2 \overline z_2}}-\frac{5}{6}\frac{Q_{z_2^2 \overline z_2}Q_{z_2 \overline z_2^2}}{Q^3_{z_2 \overline z_2}} \right)p^2_{x_0}  + \left(2 - \frac{Q_{z_2\overline z_2^2}Q_{z_2} +Q_{z_2^2\overline z_2} Q_{\overline z_2}}{3Q^2_{z_2\overline z_2}}\right) p_{x_0}p_{y_1}\\
&+i \frac{Q_{z^2_2\overline z_2}}{3Q^2_{z_2\overline z_2}} p_{x_0}p_{z_2} - i \frac{Q_{z_2\overline z_2^2}}{3Q^2_{z_2\overline z_2}} p_{x_0}p_{\overline z_2} -\frac{|Q_{\overline z_2}|^2}{3 Q_{z_2 \overline z_2}} p_{y_1}^2 + i\frac{Q_{z_2}}{3 Q_{z_2 \overline z_2}}p_{y_1}p_{z_2}
\\
&-i \frac{Q_{\overline z_2}}{3 Q_{z_2 \overline z_2}} p_{y_1}p_{\overline z_2} - \frac{1}{3 Q_{z_2 \overline z_2}} |p_{z_2}|^2
\\ \\
=& P  \left(\begin{matrix}
\frac{1}{2}\frac{Q_{z_2^2 \overline z_2^2}}{Q^2_{z_2 \overline z_2}}-\frac{5}{6}\frac{Q_{z_2^2 \overline z_2}Q_{z_2 \overline z_2^2}}{Q^3_{z_2 \overline z_2}} & i\left(1- \frac{Q_{z_2\overline z_2^2}Q_{z_2} }{3Q^2_{z_2\overline z_2}}\right)&  - i \frac{Q_{z_2\overline z_2^2}}{3Q^2_{z_2\overline z_2}}     \\
-i\left(1-  \frac{Q_{z_2^2\overline z_2} Q_{\overline z_2}}{3Q^2_{z_2\overline z_2}}\right) & -\frac{|Q_{\overline z_2}|^2}{3 Q_{z_2 \overline z_2}}   & -\frac{Q_{\overline z_2}}{3 Q_{z_2 \overline z_2}}  \\
i \frac{Q_{z^2_2\overline z_2}}{3Q^2_{z_2\overline z_2}} &  -\frac{Q_{z_2}}{3 Q_{z_2 \overline z_2}}  &  - \frac{1}{3 Q_{z_2 \overline z_2}}  \\
\end{matrix}\right) P^*.
\end{aligned}\]

As a particular case we consider 
$$Q(z_2,\overline z_2)= |z_2|^2+az_2^2\overline{z_2}^4+\overline{a}z_2^4\overline{z_2}^2 = 
|z|^2 + 2  a |z_2|^4\real z_2^2,  $$ where we assume that $a \in \R$ from now on, we can
explicitly compute that
\[\begin{aligned} H &= -\frac{2}{3d^2}  2  p_{x_0} p_{y_1} \left(8 a z_2 \bar{z}_2 \left(5 a z_2^5 \bar{z}_2+14 a
                      z_2^3 \bar{z}_2^3+5 a z_2 \bar{z}_2^5+2 \bar{z}_2^2+2 z_2^2\right)-3 d^2\right)\\
   &\quad -\frac{4 i a}{d^3} p_{x_0} 
   \left(z_2 \left(3 \bar{z}_2^2+z_2^2\right) p_{\bar z_2}-\left(\bar{z}_2^3+3 z_2^2
     \bar{z}_2\right) p_{z_2}\right)\\
  &\quad -\frac{1}{3d}  \left|z_2 p_{y_1} \left(2 a z_2 \bar{z}_2
   \left(2 \bar{z}_2^2+z_2^2\right)+1\right)-i p_{z_2}\right|^2
  \\
 &\quad  -\frac{4 a}{3d^3} p_{x_0}^2 \left(40 a z_2 \bar{z}_2 \left(10 z_2^2
   \bar{z}_2^2+3 \bar{z}_2^4+3 z_2^4\right)-9 d \left(\bar{z}_2^2+z_2^2\right)\right), \end{aligned}\]
where $d=Q_{z_2 \bar z_2} = 8 a z_2^3 \bar{z}_2+8 a z_2 \bar{z}_2^3+1$. If we 
truncate this expression at order 6, discarding terms which are quadratic or higher order in $a$, we obtain
\[\begin{aligned}
H_0=& 24  a (\real z_2^2)  p^2_{x_0} + \left(2- \frac{64 a}{3}\left( |z_2|^2 \real z_2^2 \right) \right) p_{x_0}p_{y_1}
+\frac{i a}{3} \left(24  z_2^2 \overline z_2+ 8\overline z_2^3\right) p_{x_0}p_{z_2} \\&- \frac{i a}{3} \left(8 z_2^3 + 24  z_2 \overline z_2^2\right) p_{x_0}p_{\overline z_2} 
+\frac{1}{3} \left(-|z_2|^2+4 a |z_2|^4 \real z_2^2\right) p_{y_1}^2 \\
&
+ \frac{i}{3} \left(\overline z_2 -  a z_2 \left( 4 |z_2|^4 +  6  \overline z_2^4\right)\right)p_{y_1}p_{z_2} - \frac{i}{3} \left(z_2 - a \overline{z}_2 \left( 4 |z_2|^4 +  6 z_2^4\right)\right) p_{y_1}p_{\overline z_2} \\
 & - \frac{1}{3}\left(1 - 16 a |z_2|^2 \real z_2^2 \right) |p_{z_2}|^2.
\end{aligned}\]
We are going to use this as our {\em model Hamiltonian}. 
The system of ODEs associated to $H_0$  can now be obtained as in Example \ref{exsphere} with the only difference that $z_2' = 2\frac{\partial H_0}{\partial p_{\overline z_2}}$, $\overline z_2' = 2\frac{\partial H_0}{\partial p_{z_2}}$, $p_{z_2}' = -2\frac{\partial H_0}{\partial \overline z_2}$, and  $p_{\overline z_2}' = -2\frac{\partial H_0}{\partial z_2}$:
\begin{equation}\label{modelODE}
\begin{aligned}
0  = & \ H_0\left(z_2(t), p_{x_0}(t), p_{y_1}(t), p_{z_2}(t)\right)\\
x_0'  = & \  48 a \left( \real z_2 \right) p_{x_0} + \left(2- \frac{64 a}{3}\left(  |z_2|^2 \real z_2 \right)\right) p_{y_1}\\
&  +\frac{i a}{3} \left(24  z_2^2 \overline z_2 + 8 \overline z_2^3\right) p_{z_2} - \frac{i a}{3} \left(8 z_2^3 + 24  z_2 \overline z_2^2\right) p_{\overline z_2}\\
y_1' = &   \left(2- \frac{64 a}{3}( |z_2|^2 \real z_2^2)\right) p_{x_0} + \frac{2}{3} \left(-|z_2|^2+2 a |z_2|^4\real z_2^2 \right) p_{y_1}   \\
&  + \frac{i}{3} \left(\overline z_2 - a z_2 (4 |z_2|^4 +  6  \overline z_2^4)\right)p_{z_2}- \frac{i}{3}\left(z_2 + a \overline z_2 (4 |z_2|^4 +  6   z_2^4)\right) p_{\overline z_2} \\
z_2' = & -\frac{16 i a}{3}\left(  z_2^3 + 3  |z_2|^2 \overline z_2\right)p_{x_0} - \frac{2i}{3}\left(z_2 -  a |z|^2( 6 z_2^3 - 4 z_2  \overline z_2^2 ) \right)p_{y_1} \\
& - \frac{2}{3}\left(1 - 16 a |z_2|^2 \real z_2^2 \right)p_{z_2}\\
p_{x_0}' = & \ 0\\
p_{y_1}' = & \ 0\\
p_{z_2}' = & -48 a \overline z_2p_{x_0}^2 + \frac{64 a}{3}\left(  z_2^3 + 3  z_2 \overline z_2^2\right) p_{x_0}p_{y_1}  - \frac{96 a i}{3}\left(\real z_2^2\right) p_{x_0}p_{z_2} \\
& + \frac{96 a i}{3}|z_2|^2 p_{x_0}p_{\overline z_2} + \frac{2}{3}\left(z_2 - 4 a |z_2|^2 ( z_2^3 + 2  z_2 \overline z_2^2)\right)p_{y_1}^2 +  \\
&  + \frac{i}{3}\left(-2 + 16 a |z_2|^2 (z_2^2 +  3 \overline z_2^2)\right)p_{y_1}p_{z_2} - \frac{12 a i}{3}\left( z_2^4 + 2 z_2^2\overline z_2^2\right) p_{y_1}p_{\overline z_2}  \\
& - \frac{16 a}{3}\left( z_2^3  + 3 z_2 \overline z_2^2\right) |p_{z_2}|^2\\
\end{aligned}
\end{equation}

\subsection{The general case: weighted Taylor expansion of the Fefferman Hamiltonian}

Consider a strictly 
pseudoconvex hypersurface $M\subset \C^2$, locally written in Chern-Moser normal form as $\rho=0$, with
\[\rho= 2\real z_1 - (|z_2|^2 +2 a |z_2|^4 \real z_2^2 + (\imag z_1)\cdot \eta(\imag z_1, z_2, \overline z_2) + \delta(z_2,\overline z_2))\]
where $a\in \R$ and $\eta$, $\delta$ are functions of weighted orders $O(6)$ and $O(7)$ respectively. In the following, we give weighted degrees to all the monomials in the variables $x_0, y_1, z_2$ and the conjugate momenta $p_{x_0}, p_{y_1}, p_{z_2}$ according to the weight assignments \eqref{eqweight}.

\begin{lemma}\label{lemrho}
Let $H$ and $H_0$  be the Fefferman Hamiltonians associated respectively to $\rho$ and  $\rho_0 = 2\real z_1 - (|z_2|^2 +az_2^2\overline z_2^4 + a z_2^4\overline z_2^2) $. We then have  
$$H=H_0+O(7).$$ 
\end{lemma}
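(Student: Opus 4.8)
The plan is to expand the Fefferman Hamiltonian \eqref{eqH} in its weighted Taylor series and show that passing from $\rho_0$ to $\rho=\rho_0+r$ changes $H$ only in weighted order $\geq 7$. Here $r=\rho-\rho_0=-\bigl((\imag z_1)\,\eta+\delta\bigr)$, and the first thing to record is that $\wt r\geq 7$: since $\wt z_2=1$ and $\wt y_1=2$, we have $\delta=O(7)$ and $(\imag z_1)\eta=O(8)$. The guiding principle I would make precise is that the Fefferman construction \emph{preserves weighted order}, i.e.\ a term of weighted order $d$ in the defining function contributes to $H$ only in weighted order $d$; conceptually this is the equivariance of the Fefferman metric under the anisotropic dilations $(z_1,z_2)\mapsto(\delta^2 z_1,\delta z_2)$, which fix the model and make \eqref{eqsphere} homogeneous of degree $2$. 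Because $r$ is no longer rigid I would nonetheless verify the principle directly from \eqref{eqH}.

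The bookkeeping assigns to each ingredient its \emph{natural weight}, namely the weighted order of that quantity for the model sphere, and uses that $\partial_{z_1},\partial_{\overline z_1}$ lower weighted order by $2$ while $\partial_{z_2},\partial_{\overline z_2}$ lower it by $1$. Each entry occurring in $J(\rho)$ and each entry $A_{jk}$ is a fixed derivative of $\rho$ whose weight-lowering equals $2$ minus the natural weight of that slot; since the leading part of $\rho$ has weighted order $2$ and $\wt r\geq 7$, the perturbations of these entries exceed their slot's natural weight by at least $5$, uniformly. I would then propagate this uniform excess: the cofactor of each entry of $J(\rho)$ carries the negative of that entry's natural weight, so the entry weight cancels and $\delta\Phi$ has weighted order $\geq 5$; differentiating gives the orders of $\delta(\overline\partial\Phi)$ and of $\delta\tilde\Phi$; and $\delta(A^{-1})=-A^{-1}(\delta A)A^{-1}$ inherits the same excess, so that the $(k,l)$ entry of $\delta(A^{-1})$ has weighted order at least $7$ minus the weights of the $k$-th and $l$-th momentum components. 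Since $\Phi=1+O(4)$ is a unit, dividing by it never lowers weighted order.

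Assembling the three summands of \eqref{eqH} and attaching the momenta, of weights $\wt p_{x_0}=2$, $\wt p_{y_1}=0$, $\wt p_{z_2}=1$, turns these bounds into orders of monomials in $H$. The term $P\,\delta(A^{-1})\,P^*$ has weighted order $\geq 7$, because the two momentum weights exactly cancel the $7$-deficit in $\delta(A^{-1})$; in the second summand the factor $p_{x_0}$ supplies weight $2$ while the $r$-perturbation of $\overline\partial\Phi\cdot A^{-1}\cdot P^*$ has order $\geq 5$; and in the third summand, which already vanishes for the sphere, $p_{x_0}^2$ supplies weight $4$ while $\delta\bigl(\Tr(\tilde\Phi A^{-1})\bigr)$ has order $\geq 3$. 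Each total is $\geq 7$, and the omitted terms quadratic or higher in $r$ are of even higher weighted order, giving $H=H_0+O(7)$.

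I expect the main obstacle to be the non-rigid contributions to $\Phi$ and $\tilde\Phi$. In the model \eqref{eqHrigid} the Monge--Amp\`ere determinant collapses to $Q_{z_2\overline z_2}$, but once $r$ depends on $\imag z_1$ one must control the full $3\times 3$ determinant and the fourth-order quantity $\tilde\Phi$, where the $z_1$-derivatives lower weighted order by the maximal amount $2$ and are paired with the heaviest momentum $p_{x_0}$. The crux is to confirm that the uniform excess survives differentiation and the cofactor and inverse algebra in exactly the slots --- $\tilde\Phi_{z_1\overline z_1}$ against $(A^{-1})_{22}$, and $\overline\partial\Phi$ in the $p_{x_0}$-term --- where a careless estimate could otherwise drop below weighted order $7$.
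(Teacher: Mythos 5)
Your proposal is correct and takes essentially the same approach as the paper: both arguments track weighted orders through the explicit formula \eqref{eqH}, exploiting the cofactor structure of $A^{-1}$ (the paper records this as the matrix of orders in \eqref{eqainv}, you encode the identical cancellation as a uniform ``excess $5$'' of each perturbed entry over its slot's natural weight), and then bound the three summands of $H$ separately after attaching the momentum weights from \eqref{eqweight}. The only difference is organizational --- the paper bookkeeps $H(\rho)$ directly and observes that every term of weighted order $\leq 6$ comes from $\rho_0$ alone, whereas you bound the difference $H(\rho)-H(\rho_0)$ as a perturbation in $r=\rho-\rho_0$ --- but the substance, including the identification of the critical slots, is the same.
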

\begin{proof}
We denote by $Q_k$ a generic homogeneous polynomial of weighted order $k$. We emphasize that, in the below, the polynomials $Q_k$ are not necessarily the same.  Moreover,  throughout the computations, each $Q_k$, $k\leq 6$, comes directly from differentiating or multiplying terms in $\rho_0$.     
We  then 
write  $$\rho =  \rho_0 + O(7) = 2\real z_1-|z_2|^2 -2 a |z_2|^4 \real z_2^2 + O(7).$$ 
The expression of the Hamiltonian $H$ associated to $\rho$ is given by \eqref{eqH}. Since all terms in $H$ involve the matrix  $A^{-1}$, we first focus on computing the order in its entries. Computing explicitly the inverse of $A$, we get
\[A^{-1}= \left(\begin{matrix}
0 & i\rho_{\overline z_1}&  i \rho_{\overline z_2}     \\
-i \rho_{ z_1}& 3 \rho_{z_1 \overline z_1} & 3 \rho_{z_1 \overline z_2} \\
-i \rho_{ z_2} & 3 \rho_{z_2 \overline z_1} & 3  \rho_{z_2 \overline z_2}  \\
\end{matrix}\right)^{-1}
\]
\[= \frac{3}{\det A}\left(\begin{matrix}
3(\rho_{z_1 \overline z_1}  \rho_{z_2 \overline z_2} -  \rho_{z_1 \overline z_2} \rho_{z_2 \overline z_1})  & -i(\rho_{\overline z_1}\rho_{z_2 \overline z_2} - \rho_{\overline z_2} \rho_{z_1 \overline z_2} ) &  i(\rho_{\overline z_1}\rho_{z_1 \overline z_2} - \rho_{\overline z_2} \rho_{z_1 \overline z_1} )    \\
i(\rho_{ z_1}\rho_{z_2 \overline z_2} - \rho_{ z_2} \rho_{z_1 \overline z_2} ) &\rho_{ z_2}\rho_{\overline z_2}/3   & - \rho_{ z_2}\rho_{\overline z_1}/3 \\
-i(\rho_{ z_1}\rho_{z_2 \overline z_1} - \rho_{ z_2} \rho_{z_1 \overline z_1} ) & - \rho_{ z_1}\rho_{\overline z_2}/3   & \rho_{ z_1}\rho_{\overline z_1}/3  \\
\end{matrix}\right)
\]

A careful bookkeeping of the weighted order of the entries of the matrix above as well as the tracking of the contribution of $\rho_0$ alone lead to  
\begin{equation}\label{eqainv}
A^{-1}= \frac{3}{\det A}\left(\begin{matrix}
O(4) &Q_0 + Q_4 + O(5)&  O(5)  \\
Q_0 + Q_4 + O(5)&Q_2+Q_6+O(7)  & Q_1 + Q_5 + O(6) \\
O(5)& Q_1+Q_5+O(6) & Q_0 + O(6) \\
\end{matrix}\right)
\end{equation}
Moreover we have $\displaystyle \frac{1}{\det A}=\frac{1}{ 3\rho_{z_2\overline z_2} + O(5)}=-\frac{1}{3}+Q_4+O(5)$.

We can now investigate the first term $P A^{-1} P^*$ in the Hamiltonian Hamiltonian $H$. It follows from \eqref{eqainv} and the order of the components of $P=(p_{x_0},ip_{y_1},p_{x_2}+ip_{y_2})$ (see (\ref{eqweight})) that
$$P A^{-1} P^*=Q_2+Q_6+O(7).$$ 
We now consider the term 
$\displaystyle \frac{2p_{x_0}}{\Phi}\imag \left( \overline{\partial}\Phi \cdot A^{-1}\cdot P^*\right)$.
By a very similar computation, we obtain
$$\overline{\partial}\Phi=\left(0,\Phi_{\overline z_1},\Phi_{\overline{z}_2}\right)=\left(0,O(4),Q_3+O(4)\right),$$
and thus
$$\frac{2p_{x_0}}{\Phi}\imag \left( \overline{\partial}\Phi \cdot A^{-1}\cdot P^*\right)=Q_6+O(7).$$
Finally, we also have 
$$\frac{p_{x_0}^2}{2\Phi} \Tr (\tilde{\Phi} A^{-1})=Q_6+O(7).$$
This proves the lemma. 
\end{proof}

\section{Proof of the main theorem}\label{sec:proof}
Let us reformulate our main theorem in the way we will prove it. 
\begin{theorem} \label{thmmain} 
Let $M\subset (\C^2,0)$ be a strictly pseudoconvex hypersurface of class $\mathcal{C}^{12}$ with  local defining equation of the form
$$\rho= 2\real z_1 - \left(|z_2|^2 +2 a |z_2|^4 \real  z_2^2 + (\imag z_1)\cdot \eta(\imag z_1, z_2, \overline z_2) + \delta(z_2,\overline z_2)\right),$$
where $\eta$ and $\delta$ are of weighted order $O(6)$ and $O(7)$ respectively. 
 If every chain for $M$ 
for a family of starting conditions as in Lemma \ref{leminit} is the boundary of a stationary disc then $a = 0$.
\end{theorem}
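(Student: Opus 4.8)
The plan is to reduce the statement to an explicit perturbative computation for the model Hamiltonian and then to detect the Cartan invariant $a$ as an obstruction to the holomorphic extension forced by the stationary disc condition. By Lemma~\ref{lemrho} we have $H=H_0+O(7)$, so the Hamiltonian system \eqref{eqHsys} for $\rho$ agrees with the model system \eqref{modelODE} through weighted order $6$; the remainder terms $\eta,\delta$ enter only at weighted order $\ge 7$ and therefore cannot contribute to the coefficient of $a$. It thus suffices to analyze the chains produced by $H_0$. First I would take the family of initial conditions furnished by Lemma~\ref{leminit}, all of whose chains pass through the origin, and solve \eqref{modelODE} perturbatively, writing each chain as $\gamma=\gamma_0+a\,\gamma_1+O(a^2)$ (equivalently, isolating the weighted-order-$6$ contribution). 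The zeroth order term $\gamma_0$ is a chain of the sphere, given explicitly in Example~\ref{exsphere}; for its closed members (those with the non-periodic linear-in-$t$ term absent, which is forced since the boundary of a disc is a closed curve) $\gamma_0$ is the boundary of a holomorphic, indeed stationary, disc, as exhibited by the model disc $f(\zeta)=(1-\zeta,1-\zeta)$. The first order correction $\gamma_1$ is obtained by linearizing \eqref{modelODE} about $\gamma_0$ and is an explicit trigonometric polynomial in the chain parameter.

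Next I would translate ``boundary of a stationary disc'' into moment conditions. A chain $\gamma$ is the boundary of a stationary disc precisely when, after a projective (M\"obius) reparametrization $\sigma$ of its canonical parameter identifying it with $b\Delta$, the two components of $\gamma\circ\sigma$ are boundary values of functions $g,h\in\mathcal O(\Delta)$, \emph{and} the associated lift $\tilde f$ of \eqref{e:stat0} also extends holomorphically. By the moment conditions \eqref{eqmom}, each such extension requirement is the vanishing of all negative Fourier coefficients, i.e. $\int_{b\Delta}\zeta^{m}(\cdot)\,d\zeta=0$ for $m\ge 0$. For $a=0$ these are satisfied by $\gamma_0$ after a suitable choice of $\sigma$ and of the positive weight in \eqref{e:stat0}, consistently with Example~\ref{exsphere}.

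I would then extract the obstruction by expanding the moment conditions to first order in $a$. The $O(a^{0})$ part holds automatically, so I would isolate the $O(a)$ part, allowing $\sigma$ and the lift weight to vary to first order in $a$ as well. Substituting the explicit $\gamma_1$, the lowest surviving negative Fourier coefficient of the boundary data (from the components of $f$ and from the lift $\tilde f$) becomes a fixed universal nonzero multiple of $a$ times a function of the initial data of Lemma~\ref{leminit} that is not identically zero over the family. Since by hypothesis every chain in the family bounds a stationary disc, this coefficient must vanish for each member; as the data-dependent factor does not vanish identically, we conclude $a=0$, i.e. the origin is umbilical.

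The principal difficulties are twofold. First, one must solve the nonlinear system \eqref{modelODE} to first order in $a$ for the entire family of Lemma~\ref{leminit}: a linear but bulky ODE computation whose output $\gamma_1$ must be kept in closed form so that its Fourier modes can be read off. Second, and genuinely delicate, one must show that the $O(a)$ obstruction cannot be absorbed by the available gauge freedom, namely the three real parameters of the M\"obius reparametrization $\sigma$ together with the freedom in the positive weight function of the lift \eqref{e:stat0}. The crux is to verify that after optimizing over all of these a single negative Fourier coefficient remains a nonzero multiple of $a$; choosing the initial conditions in Lemma~\ref{leminit} so that this surviving coefficient is as simple as possible is exactly what renders the final linear-algebra step tractable.
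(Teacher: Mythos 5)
Your overall strategy (perturb the chains of the model Hamiltonian, translate stationarity into the moment conditions \eqref{eqmom}, and do Fourier analysis on the resulting boundary data) is the same as the paper's, but the step you yourself flag as ``genuinely delicate'' is exactly where your argument breaks, and the idea that fixes it is missing. You assume that the stationary disc's boundary parametrization is obtained from the chain's canonical parameter by a M\"obius reparametrization $\sigma$, so that the gauge freedom to be quotiented out is three real parameters plus the lift weight. That is unjustified (and in general false): nothing relates the Chern--Moser projective parameter of a chain to the conformal parametrization induced by the unknown disc. In the paper, the correspondence between the chain parameter $t$ and $b\Delta$ is the boundary value of the Riemann map $h_s^{-1}$ of the projected domain $\Omega_s=\pi_2(f_s(\Delta))$ --- an unknown circle diffeomorphism --- and the weight $a_s$ is an unknown positive function, determined by the disc only up to a constant multiple (by Pang's result \cite{MR1250257}). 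So the freedom you must control is infinite-dimensional, and your final step --- ``optimize over the gauge parameters and read off one surviving negative Fourier coefficient proportional to $a$'' --- has no mechanism behind it.

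The paper's resolution, absent from your proposal, has two parts. First, it represents the disc as a graph $z_1=w_s(z)$ over $\Omega_s$ and applies the moment conditions directly on the curve $S_s=b\Omega_s$, computed in the chain parametrization \eqref{eqpara}; this makes it unnecessary to ever know the Riemann correspondence, since all unknowns (weight and reparametrization) are packaged into a single unknown function $c(s,t)$ with Fourier coefficients $\gamma_k(s)$. Second --- and this is the essential elimination step --- it plays the two families of moment conditions against each other: the conditions coming from the $\rho_{z_1}$-component $G_s$ force $\overline\gamma_{j+1}(s)=O(s^4)$ for all $j\geq 1$ (hence $\frac{d^4\overline\gamma_2}{ds^4}=O(s)$), and only then does the $\rho_{z_2}$-component $H_s$ at $j=2$, after four differentiations in $s$ (which require the regularity statements of Lemmas \ref{lemper} and \ref{lemc}), collapse to $\frac{2}{3}a\gamma_0=\frac{2}{3}a=O(s)$, i.e. $a=0$. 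Without this interplay, the unknown Fourier coefficients of $c(s,t)$ can absorb the order-$s^4$ obstruction terms and no contradiction arises from a single family of conditions. A secondary point: your expansion parameter should be $s$ (the size of the chains in the family of Lemma \ref{leminit}), not $a$, since $a$ is a fixed constant of $M$ rather than a small quantity; the error control in the paper ($H=H_0+O(7)$, $z_2(s,t)=se^{it}-\frac{4}{3}as^5e^{3it}+O(s^6)$, $T_s=2\pi+O(s^5)$) all flows from the weighted scaling in $s$, which is what makes the linearization you describe legitimate.
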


In order to prove Theorem~\ref{thmmain}, we compute an asymptotic expansion of a family of chains which would
come from circles in the case of the sphere, and find that the moment conditions for the members of the family
yield an obstruction to umbilicity in the fourth order term of that expansion. The details are as follows. 

\subsection{Computations of the orbits}
We will use a special family of solutions of the Hamiltonian system associated to $\rho$ depending on a small real parameter $s>0$. This will be achieved by making a suitable choice of initial conditions imposed in order to reproduce the circular orbits in the case of the sphere $\{2\real z_1 = |z_2|^2\}$.  
\begin{lemma}\label{leminit}
Let $x_0(.,0), \varphi, \psi, \xi$ be four functions in $s$ of  class $\mathcal{C}^7$. Then there exists a family of initial conditions of the form 
\begin{equation}\label{eqinit}
\begin{aligned}
y_1(s,0) & = s^2\varphi(s)\\
z_2(s,0)& = s + s^5\psi(s)\\
p_{x_0}(s,0)& = -\frac{1}{2}s^2+s^6\chi(s)\\
p_{y_1}(s,0)& = -\frac{3}{4}\\
p_{z_2}(s,0)& =  -\frac{3i}{4}s + s^5\xi(s)\\
\end{aligned}
\end{equation}
for  some function $\chi$  of  class $\mathcal{C}^7$ such that 
\begin{equation}\label{eqH0}
H(x_0(s,0), y_1(s,0),z_2(s,0), p_{x_0}(s,0), p_{y_1}(s,0), p_{z_2}(s,0)) = 0,
\end{equation}
where $H$ is the Fefferman Hamiltonian associated to $\rho$. 
\end{lemma}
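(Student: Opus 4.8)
The plan is to read \eqref{eqH0} as a single scalar equation in the single unknown function $\chi$, to be solved pointwise in $s$ by the implicit function theorem. Two structural facts make this work. First, the Fefferman Hamiltonian \eqref{eqH} is a polynomial of degree two in the momenta, in particular quadratic in $p_{x_0}$; and $\chi$ enters the initial data \eqref{eqinit} only through $p_{x_0}(s,0) = -\tfrac12 s^2 + s^6\chi(s)$. Writing $c$ for the value of $\chi(s)$ and substituting \eqref{eqinit} into $H$, I therefore obtain
\[
F(s,c) = F_0(s) + F_1(s)\,c + F_2(s)\,c^2,
\qquad
F_1 = s^6\,\partial_{p_{x_0}}H,
\quad
F_2 = \tfrac12 s^{12}\,\partial^2_{p_{x_0}}H,
\]
the derivatives being evaluated along \eqref{eqinit} at $c=0$. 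Solving $F(s,\chi(s))=0$ thus amounts to solving a quadratic whose coefficients depend on $s$, and the factors $s^6$, $s^{12}$ in $F_1,F_2$ are \emph{algebraic} (they are powers of $\partial p_{x_0}/\partial c = s^6$), so no derivatives are lost in producing them.

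Next I would pin down the leading orders in $s$ using the weight assignments \eqref{eqweight}. The leading part of \eqref{eqinit} is exactly the datum producing the circular orbits of Example~\ref{exsphere}; substituting it into the weight-two part of $H$ (the sphere Hamiltonian) gives an identical cancellation, so $F_0(s)=O(s^6)$. On the other hand the coefficient of $p_{x_0}p_{y_1}$ in $H_0$ is $2$ to leading order, whence $\partial_{p_{x_0}}H = 2p_{y_1}+O(s^4) = -\tfrac32 + O(s^4)$ along \eqref{eqinit}, and $\partial^2_{p_{x_0}}H = 48\,a\,\real z_2^2 + \cdots = O(s^2)$. Dividing $F=0$ by $s^6$ then yields the reduced equation
\[
\widehat F_0(s) + \widehat F_1(s)\,c + s^{8}\widehat F_2(s)\,c^2 = 0,
\qquad
\widehat F_1(0) = -\tfrac32 \neq 0 ,
\]
so that $\partial_c$ of the left-hand side is nonzero at $(s,c)=(0,\tfrac23\widehat F_0(0))$; the implicit function theorem (equivalently, the branch of the quadratic formula staying bounded as $s\to0$) produces a unique solution $c=\chi(s)$.

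The delicate point is the regularity, i.e.\ that the factor $s^6$ in $F_0$ can be extracted \emph{algebraically} rather than by Taylor division, which would lose six derivatives. Here the weighted homogeneity is essential. I would write \eqref{eqinit} as the anisotropic rescaling $\tilde\Lambda_s$ of a fixed ``unit'' configuration with $C^7$ corrections, $z_2 = s(1+s^4\psi)$, $p_{z_2}=s(-\tfrac{3i}{4}+s^4\xi)$, $p_{x_0}=s^2(-\tfrac12+s^4c)$, $p_{y_1}=-\tfrac34$. Since the weight-two and weight-six pieces $W_2,W_6$ of $H_0$ are weighted-homogeneous, $W_2(\text{init})=s^2 W_2(\widehat{\,\cdot\,})$ and $W_6(\text{init})=s^6 W_6(\widehat{\,\cdot\,})$ with $W_j(\widehat{\,\cdot\,})\in C^7$; the vanishing $W_2(\widehat{\,\cdot\,})|_{s=0}=0$ together with the explicit factors $s^4$ in $\widehat z_2-1$ and $\widehat p_{z_2}+\tfrac{3i}{4}$ gives a clean $W_2(\text{init})=s^6 w(s)$ with $w\in C^7$. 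For the genuine remainder $R=H-H_0=O(7)$ of Lemma~\ref{lemrho}, I would use that $R$ is $C^8$ (the Hamiltonian \eqref{eqH} loses four derivatives off the $C^{12}$ defining function through $\tilde\Phi=(3\Phi_{j\bar k}-\tfrac5\Phi\Phi_j\Phi_{\bar k})$, whose entries are fourth derivatives of $\rho$), is polynomial of degree $\le 2$ in the momenta, and has weighted order $\ge 7$: each monomial splits as a momentum factor of weight $m$, which equals $s^m$ times a $C^7$ unit value, multiplied by a base coefficient vanishing to weighted order $\ge 7-m$, which contributes $s^{7-m}$ times a $C^7$ function. Thus $R(\text{init})=s^7\cdot(C^7\text{-function})$, and after dividing by $s^6$ it contributes to $\widehat F_0$ a term vanishing at $s=0$ without spoiling $C^7$ regularity. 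Collecting, $\widehat F_0,\widehat F_1,\widehat F_2\in C^7$, so $\chi\in C^7$.

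I expect the regularity bookkeeping in the previous paragraph to be the main obstacle: one must guarantee that \emph{every} occurrence of the factor $s^6$ is produced by the weighted scaling $\tilde\Lambda_s$ (hence compatible with the limited differentiability $H\in C^8$) rather than by cancelling vanishing Taylor coefficients, which would degrade $\chi$ below $C^7$. Once the reduced equation is set up with non-vanishing linear coefficient $\widehat F_1(0)=-\tfrac32$, the existence, uniqueness, and class $C^7$ of $\chi$ all follow at once from the implicit function theorem, and the value $\widehat F_0(0)$ of the weight-six obstruction is exactly the quantity that the subsequent moment-condition analysis will exploit.
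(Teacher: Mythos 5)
Your proposal follows essentially the same route as the paper's proof: substitute the initial data \eqref{eqinit} into $H$, use Lemma \ref{lemrho} to reduce to the model Hamiltonian plus an $O(7)$ error, observe that the spherical $s^2$-terms cancel and that the resulting equation is quadratic in $\chi$ of the form $s^6\bigl(\bigl(-\tfrac32+O(s)\bigr)\chi + O(s^6)\chi^2 + O(1)\bigr)=0$, and apply the implicit function theorem after dividing by $s^6$. Your extra bookkeeping about extracting the factor $s^6$ algebraically through the weighted scaling (rather than by Taylor division) is a refinement of a point the paper leaves implicit, but the underlying argument is the same.
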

\begin{proof}
Substituting the initial conditions \eqref{eqinit} 
into the formula of the Hamiltonian and using Lemma \ref{lemrho}, we get
\[(24a  s^2 + O(s^6))\left(\frac{1}{4}s^4-s^8\chi+s^{12}\chi^2 \right) +\]
\[\left(2 - \frac{64}{3}a  s^4 + O(s^8) \right)\left(\frac{3}{8}s^2 - \frac{3}{4}s^6\chi \right)+\left(16a  s^4 + O(s^8)\right)\left(-\frac{1}{2}s^2 +s^6\chi \right) +\]
\[ \frac{3}{16}(-s^2 + O(s^6))- \frac{3}{8}\left(s^2 + O(s^6)\right) - \frac{3}{16}s^2 + O(s^6) = 0 \]

where the $O(\cdot)$ terms in the expression above may depend on $s,\alpha, x_0(.,0), \varphi, \psi, \xi$ but not $\chi$. Developing the products explicitly, we note that the $s^2$ terms (coming only from the spherical part of the Hamiltonian) simplify, while the next lowest order terms are $O(s^6)$, giving the following:
\[s^6\left(\left(-\frac{3}{2} + O(s)\right)\chi + O(s^{6})\chi^2  + O(1)\right) = 0\]
where once again the $O(\cdot)$ terms do not depend on $\chi$. Applying the implicit function theorem to the expression inside the parenthesis, we conclude that for any choice of $x_0(.,0)$, $\varphi$, $\psi$, and $ \xi$ there exists locally a unique function $\chi(s)$  of  class 
$\mathcal{C}^7$ such that Equation \eqref{eqH0} is satisfied.
\end{proof}
This choice of initial conditions then provides the following  family of solutions of the Hamiltonian system associated to $\rho$
\begin{equation}\label{eqfam}
\begin{aligned}
x_0(s,t) &  =  x_0^0(t) + \ldots\\
y_1(s,t) & = s^2 y_1^2(t) + \ldots\\
z_2(s,t) & = s z^1_2(t) + s^2 z^2_2(t) + \dots\\
p_{x_0}(s,t) &  =-\frac{1}{2}s^2+s^6\chi(s) + \dots\\
{p_{y_1}(s,t)} & = -\frac{3}{4} + \dots\\
p_{z_2}(s,t) & = s p_{z_2}^1(t) + s^2 p_{z_2}^2(t) + \ldots\\
\end{aligned}
\end{equation}

\begin{remark}\label{rempara} 
For our later computations, the most important components of the solutions are  
$y_1(s,t)$  and $z_2(s,t)$.
 Due to the expression of the defining function, the terms involving $y_1(s,t)$ appear to high order, and for this reason, it is enough to know that $y_1(s,t)$ is of order $O(s^2)$. As for $z_2(s,t)$, we  need to know more precisely its asymptotic behavior, beyond the fact that its order is $O(s)$.
\end{remark}
\begin{lemma}\label{eqz_2}
We have 
\begin{equation*}
z_2(s,t)= s e^{it} -\frac{4}{3} s^5 {a} e^{3it} + O(s^6).
\end{equation*}
\end{lemma}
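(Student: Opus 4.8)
The plan is to integrate the Hamiltonian system order by order in the scaling parameter $s$, exploiting weighted homogeneity to reduce everything to the explicit model system \eqref{modelODE}. First I would invoke Lemma~\ref{lemrho}: the Fefferman Hamiltonian of $\rho$ differs from the model $H_0$ only at weighted order $\ge 7$. Since $z_2$ has weight $1$ and $z_2' = 2\,\partial H/\partial p_{\overline z_2}$, differentiation in $p_{\overline z_2}$ (weight $1$) drops the weighted order by one, so this discrepancy affects $z_2'$—and hence, after integrating in the weight-zero time $t$, affects $z_2$ itself—only at order $O(s^6)$. The full and model flows therefore agree modulo $O(s^6)$, so it suffices to integrate \eqref{modelODE}. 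Along any of its orbits $p_{x_0}' = p_{y_1}' = 0$, so $p_{y_1}\equiv -\tfrac34$ and $p_{x_0}\equiv -\tfrac12 s^2 + O(s^6)$ are constant in $t$ and may be treated as the numbers $-\tfrac34$ and $-\tfrac12 s^2$ up to the allowed error.

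Next I would insert the expansions $z_2 = \sum_{k\ge1}s^k z_2^k(t)$ and $p_{z_2}=\sum_{k\ge1}s^k p_{z_2}^k(t)$ from \eqref{eqfam} into the $z_2'$ and $p_{z_2}'$ equations of \eqref{modelODE} and match powers of $s$. The structural key is that every $a$-dependent term in those two equations has weighted order at least $5$: the lowest ones arise from products such as $z_2^3\,p_{x_0}$, $|z_2|^2\,(\real z_2^2)\,p_{z_2}$, or $|z_2|^2 z_2^3\,p_{y_1}$, each of order $s^5$. Hence for $k=1,2,3,4$ the pair $(z_2^k,p_{z_2}^k)$ obeys the single constant-coefficient system
\[ \frac{d}{dt}\begin{pmatrix}z_2^k\\ p_{z_2}^k\end{pmatrix} = \begin{pmatrix}\tfrac{i}{2} & -\tfrac23\\[2pt] \tfrac38 & \tfrac{i}{2}\end{pmatrix}\begin{pmatrix}z_2^k\\ p_{z_2}^k\end{pmatrix}, \]
whose matrix has trace $i$, determinant $0$, hence eigenvalues $0$ and $i$; the zero eigenvalue reflects the projective reparametrization freedom of chains. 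At order $s$ the data $z_2^1(0)=1,\ p_{z_2}^1(0)=-\tfrac{3i}{4}$ lie exactly in the $i$-eigenspace, giving the closed circular orbit $z_2^1(t)=e^{it}$, $p_{z_2}^1(t)=-\tfrac{3i}{4}e^{it}$. At orders $s^2,s^3,s^4$ the initial data in \eqref{eqinit} vanish, so uniqueness forces $z_2^k\equiv p_{z_2}^k\equiv 0$ for $k=2,3,4$.

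The heart of the matter is order $s^5$, where the $a$-terms first contribute. Evaluating all of them on the leading circular orbit turns the system into an inhomogeneous linear ODE $X' = MX + F(t)$, with the same matrix $M$ and a forcing $F(t)$ that, after reducing products such as $\cos(2t)\,e^{it}$ to pure Fourier modes, is an explicit trigonometric polynomial in $e^{3it}$ and $e^{-it}$ with coefficients proportional to $a$. Since the forcing frequencies $3$ and $-1$ are non-resonant with the homogeneous frequencies $1$ and $0$, a bounded particular solution of the same frequencies exists and is found by inverting $(3iI-M)$ and $(-iI-M)$; the $e^{3it}$ block yields exactly the Fourier coefficient $-\tfrac43 a$ for $z_2^5$, while the homogeneous $e^{it}$ and constant modes are pinned down by the free order-$s^5$ data $\psi(0),\xi(0)$. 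Assembling the orders then gives $z_2(s,t) = s e^{it} - \tfrac43 s^5 a\, e^{3it} + O(s^6)$.

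I expect the only genuine obstacle to be the bookkeeping of the order-$s^5$ forcing: there are three $a$-terms in the $z_2'$ equation and eight in the $p_{z_2}'$ equation, and each must be expanded on the leading orbit and resolved into its $e^{3it}$ and $e^{-it}$ Fourier components before the $2\times 2$ linear solve. The fact that the messy coefficients collapse into the single rational number $-\tfrac43$ in the $e^{3it}$ channel is precisely the consistency check that the derivation of \eqref{modelODE} and the order counting have been carried out correctly; I would use this clean collapse as the main internal verification of the computation.
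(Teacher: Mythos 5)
Your proposal is correct and follows essentially the same route as the paper: reduce to the model system \eqref{modelODE} via Lemma~\ref{lemrho}, expand in powers of $s$, observe that orders $s^2$ through $s^4$ vanish since the $a$-terms have weighted order $5$, and solve the inhomogeneous linear system at order $s^5$ forced by the circular orbit. The only difference is presentational: you derive the particular solution by inverting the non-resonant matrices $(3iI-M)$ and $(-iI-M)$, whereas the paper simply exhibits the particular solution $z_2^5=-\tfrac{4}{3}a e^{3it}$, $p_{z_2}^5=-\tfrac{3i}{2}a e^{3it}+3iae^{-it}$ (and indeed your linear solve reproduces exactly these coefficients).
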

\begin{proof}

According to Lemma \ref{lemrho}, solving the Hamiltonian system associated to $\rho$ up to order $s^5$ is equivalent to solving the system \eqref{modelODE}. We will proceed
iteratively by expanding in powers of $s$. The terms of order $s$ in the equations for $z_2'$ and $p_{z_2}'$ give the system
\[ {z^1_2}'(t)=   \frac{i}{2} z^1_2(t)  - \frac{2}{3} p_{z_2}^1(t)\]
\[{p_{z_2}^1}'(t) =  \frac{3}{8} z_2^1(t)  +\frac{i}{2} p_{z_2}^1(t).\]
Using the initial conditions $z^1_2(0)=1$ and $p_{z_2}^1(0)=-\frac{3i}{4}$, we get $z_2^1(t)=e^{it}$ and $p_{z_2}^1(t)=-\frac{3i}{4}e^{it}$.
Now, the terms in $s^2$ lead to 
\[ {z^2_2}'(t)=   \frac{i}{2} z^2_2(t)  - \frac{2}{3} p_{z_2}^2(t)\]
\[{p_{z_2}^2}'(t) =  \frac{3}{8} z_2^2(t)  +\frac{i}{2} p_{z_2}^2(t).\]
If $p_{y_1}^1 = 0, z_2^2(0)=0$  and $p_{z_2}^2(0)$, then $z_2^2(t)\equiv 0$ and  $p_{z_2}^2(t)\equiv 0 $. Similarly, we get $z_2^j(t)\equiv 0$ and  $p_{z_2}^j(t)\equiv 0$ for $j=3,4$.  
Finally, computing the terms of order $s^5$, we obtain 
\[ {z^5_2}'(t)=   \frac{i}{2} z^5_2(t)  - \frac{2}{3} p_{z_2}^5(t)-\frac{13i}{3}
a e^{3it} + 2i a e^{-it} \]
\[{p_{z_2}^5}'(t) =  \frac{3}{8} z_2^5(t)  +\frac{i}{2} p_{z_2}^5(t) + \frac{17}{4} a e^{3it} + \frac{9}{2} a e^{-it}.\]
A particular solution is given by 
\[z_2^5(s,t)= -\frac{4}{3}a e^{3it}\]
\[p_{z_2}^5(s,t)=-\frac{3i}{2}a e^{3it} +3iae^{-it}.\]
This concludes the proof of the lemma. 
\end{proof}

\subsection{Enforcing stationarity}
We consider the $y_1(s,t), z_2(s,t)$ components of the family of solutions \eqref{eqfam} of the Hamiltonian system associated to $\rho$. Assume that there exists a family of stationary discs $f_s=(g_s,h_s)$ such that $f_s(b\Delta)$ coincides with the image of the chain $(z_1(s,.),z_2(s,.))$, where the real part of $z_1(s,.)$  is determined by $\rho$.  In particular, note this implies that the chain  $(z_1(s,.),z_2(s,.))$ is periodic. We denote by $T_s$ its period.  We may assume that the projection on the second coordinate $\pi_2: f_s(\Delta)\to \mathbb C_{z_2}$ is injective and that $0\in \pi_2(f_s(\Delta))=h_s(\Delta)$. Then we can take as $h_s$ the unique Riemann map $\Delta\to h_s(\Delta)$ such that $h_s(0)=0$ and $h'_s(0)>0$.

By definition, $f_s=(g_s,h_s)$ is stationary if and only if there exists a continuous function $a_s:b\Delta\to \mathbb R^+$ and  functions $\widetilde g_s,\widetilde h_s\in \mathcal O(\Delta)\cap C(\overline \Delta)$ satisfying
\begin{equation} \label{eq:stat}
\begin{aligned} 
\widetilde g_s(\zeta)&= \zeta a_s(\zeta) \frac{\partial \rho}{\partial z_1}\left(g_s(\zeta), h_s(\zeta), \overline{g_s(\zeta)},\overline{h_s(\zeta)}\right)  \\ 
\widetilde h_s(\zeta) &= \zeta a_s(\zeta) \frac{\partial \rho}{\partial z_2}\left(g_s(\zeta), h_s(\zeta), \overline{g_s(\zeta)},\overline{h_s(\zeta)}\right)
\end{aligned} 
\end{equation}
for all $\zeta\in b\Delta$.
Define now $\Omega_s:=h_s(\Delta)\subset \mathbb C$ and $S_s:=b\Omega_s$. We also write $h_s^{-1}(z)=ze^{\varphi_s(z)}$ for a certain holomorphic function 
$\varphi_s(z)$.
Evaluating Equations (\ref{eq:stat}) for $\zeta=h_s^{-1}(z)$, we obtain
\[
\begin{aligned} 
\widetilde g_s(h_s^{-1}(z)) &= z e^{\varphi_s(z)} a_s(h_s^{-1}(z)) \frac{\partial \rho}{\partial z_1}\left(g_s(h_s^{-1}(z)), z, \overline {g_s(h_s^{-1}(z))}, \overline{z} \right)\\
\widetilde h_s(h_s^{-1}(z)) &= z e^{\varphi_s(z)} a_s(h_s^{-1}(z)) \frac{\partial \rho}{\partial z_2}\left(g_s(h_s^{-1}(z)),z,\overline {g_s(h_s^{-1}(z))},  \overline{z}  \right) 
\end{aligned} 
\]
for all $z\in S_s$. We then set $b_s(z):=a_s(h_s^{-1}(z))$, $G_s(z):=e^{-\varphi_s(z)}\widetilde g_s(h_s^{-1}(z))$ and $H_s(z):=e^{-\varphi_s(z)}\widetilde h_s(h_s^{-1}(z)) $. Furthermore, if we write each disc $f_s(\Delta)$ as a graph $\{z_1=w_s(z)\}$ over its projection $\Omega_s$, we have $w_s(z)=g_s(h_s^{-1}(z))$. Thus we can rewrite the previous system as
\begin{equation} \label{eq:stat2}
\begin{aligned} 
G_s(z) &= z  b_s(z) \frac{\partial \rho}{\partial {z_1}}\left(w_s(z),z, \overline{w_s(z)}, \overline{z}\right) \\
H_s(z) &= z  b_s(z) \frac{\partial \rho}{\partial {z_2}}\left(w_s(z),z, \overline{w_s(z)}, \overline{z} \right) 
\end{aligned} 
\end{equation}
for $z\in S_s$. In order to apply the moment conditions \eqref{eqmom} to the functions $G_s$ and $H_s$, we need to find an adapted parametrization of the curve $S_s$.
We first consider the scaling $\Lambda_s: \mathbb{C}\to  \mathbb{C}$ defined by 
$\Lambda_s(z)=z/s$ and define $\widetilde \Omega_s:=\Lambda_s(\Omega_s)$ and $\widetilde S_s:=\Lambda_s(S_s)$. Note that, with this change of variables, the moment 
conditions \eqref{eqmom}  applied to $G_s(z)$ and $H_s(z)$ become 
\begin{equation}\label{eqmomscal}
\int_{\widetilde S_s} z^m G_s(sz) dz =\int_{\widetilde S_s} z^m H_s(sz) dz =0,
\end{equation}
for all  $m\geq 0.$
We may now set an adapted parametrization of $\widetilde S_s$. Since the image $f_s(b\Delta)$ coincides with the image of the chain $(z_1(s,.),z_2(s,.))$, 
we consider the parametrization of $\widetilde S_s$ given by 
\begin{equation}\label{eqpara}
[0,T_s]\ni t\mapsto \widehat z_2(s,t) :=\frac{z_2(s,t)}{s}\in \widetilde S_s.
\end{equation} 
According to Lemma \ref{eqz_2}, we can write $\widehat z_2(s,t) = r(s, t)e^{it}$ with 
$$r(s,t)= 1 + k(t) s^4 + O(s^5),$$ 
where 
$$k(t) = -\frac{4}{3} a e^{2it}.$$ Note that $r(s,t)$ is not necessarily real valued. Moreover, {due to standards results on ODEs (see for instance
\cite[Theorem 4.1]{MR1929104})}, the parametrization $\widehat z_2(s,t)$ is of class $\diffable{7}$ in both variables. 
A straightforward computation leads to the following useful lemma.
\begin{lemma}\label{lemr}
For $j\geq 1$,  
\begin{align*}
r(s, t)^j  & = 1 + j k(t) s^{4} + O(s^{5}), \\
r(s, t)^j \overline {r(s, t)}  & = 1 + \left(j k(t)+\overline{k(t)}\right) s^{4} + O(s^{5}), \\
\frac{\partial r}{\partial t}(s, t) & = \frac{dk}{dt}(t)s^4 + O(s^5).
\end{align*}
\end{lemma}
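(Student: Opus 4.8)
The plan is to prove Lemma~\ref{lemr} by purely computational means, treating the three identities as exercises in expanding powers and derivatives of the asymptotic expression for $r(s,t)$ established just above. Recall that we have written $\widehat z_2(s,t) = r(s,t)e^{it}$ with $r(s,t) = 1 + k(t)s^4 + O(s^5)$, and that $\widehat z_2$ is of class $\diffable{7}$ in both variables, so all the manipulations below are justified: differentiation of the error term in $t$ is legitimate, and the $O(s^5)$ remainders are genuinely controlled in the $\diffable{7}$ topology. The key observation is that every quantity we need is an elementary algebraic or differential operation applied to a quantity of the form $1 + (\text{weight-}4\text{ term}) + O(s^5)$, so the error terms never interfere with the coefficient of $s^4$.

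For the first identity I would simply apply the binomial expansion to $r(s,t)^j = \bigl(1 + k(t)s^4 + O(s^5)\bigr)^j$. Since $k(t)s^4$ is of order $s^4$, the only contribution to the coefficient of $s^4$ comes from the linear term of the binomial series, namely $j\cdot k(t)s^4$; every product of two or more copies of the $O(s^4)$ piece is $O(s^8)$ and hence absorbed into $O(s^5)$, giving $r(s,t)^j = 1 + jk(t)s^4 + O(s^5)$. For the second identity I would multiply this by $\overline{r(s,t)} = 1 + \overline{k(t)}s^4 + O(s^5)$ (noting that conjugation commutes with the asymptotic expansion since $r$ is $\diffable{7}$), so that the cross terms are again $O(s^8)$ and the $s^4$ coefficients add: $r^j\overline r = 1 + \bigl(jk(t)+\overline{k(t)}\bigr)s^4 + O(s^5)$. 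For the third identity I would differentiate $r(s,t) = 1 + k(t)s^4 + O(s^5)$ with respect to $t$; the constant $1$ drops out, the middle term produces $\frac{dk}{dt}(t)s^4$, and one must check that $\partial_t$ of the remainder is still $O(s^5)$.

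The one step that requires genuine care, rather than being purely formal, is precisely this last point: justifying that $\frac{\partial}{\partial t}O(s^5) = O(s^5)$. Differentiating an error term in a \emph{different} variable from the one controlling its size is not automatic, and the honest justification is exactly the $\diffable{7}$ regularity together with the smooth dependence of solutions of the Hamiltonian system on initial data, which we invoked above via \cite[Theorem 4.1]{MR1929104}. Concretely, the full expansion of $\widehat z_2(s,t)$ has coefficients that are themselves smooth functions of $t$, so the remainder after the $s^4$ term is of the form $s^5 R(s,t)$ with $R$ and its $t$-derivatives jointly continuous and bounded on the compact parameter range; hence $\partial_t\bigl(s^5 R(s,t)\bigr) = s^5 \partial_t R(s,t) = O(s^5)$ uniformly. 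This is the \emph{only} obstacle I anticipate, and it is resolved by the regularity already recorded before the statement of the lemma; the remaining algebra is entirely routine, which is why the lemma is introduced as following from ``a straightforward computation.''
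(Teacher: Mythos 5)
Your proposal is correct and matches the paper's intent exactly: the paper gives no written proof beyond calling the lemma ``a straightforward computation,'' and your binomial expansion, conjugation, and term-by-term differentiation are precisely that computation. Your extra care in justifying $\partial_t O(s^5)=O(s^5)$ via the $\diffable{7}$ regularity of $\widehat z_2$ (Taylor remainder with $t$-differentiable coefficient) is a legitimate filling-in of the one non-formal step, not a deviation from the paper's route.
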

Moreover, in order to apply Fourier analysis later on, we need to understand the behavior of the period $T_s$ of $\widehat z_2(s,\cdot)$ as $s\to 0$.  
\begin{lemma}\label{lemper}
The function $[0,\epsilon]\ni s\mapsto T_s\in \mathbb R$ is of class $\mathcal{C}^7$. Furthermore, we have 
$$T_s=2\pi + O(s^5).$$
\end{lemma}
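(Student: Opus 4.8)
The plan is to show that the period $T_s$ of the chain depends smoothly on $s$ and approaches the period $2\pi$ of the spherical (circular) orbit, with an error that is controlled by the perturbation order of the Hamiltonian. The key structural observation is that, by Lemma~\ref{lemrho}, the Hamiltonian $H$ agrees with the model Hamiltonian $H_0$ up to order $O(7)$, and $H_0$ itself differs from the spherical Hamiltonian only by terms carrying a factor of $a$ which first affect $z_2(s,t)$ at order $s^5$, as recorded in Lemma~\ref{eqz_2}. Thus the orbit is a genuine smooth deformation of the circular orbit $z_2(s,t) = s e^{it}$, whose period is exactly $2\pi$.

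First I would establish smoothness of $s \mapsto T_s$. The period is characterized implicitly by a closing condition on the orbit: the chain $(z_1(s,\cdot), z_2(s,\cdot))$ is periodic, so $T_s$ is the first positive time at which the solution returns to its initial value (in the relevant components, using the parametrization $\widehat z_2(s,t) = z_2(s,t)/s$ of $\widetilde S_s$). For $s=0$ the linearized flow is the rigid rotation coming from the spherical system of Example~\ref{exsphere}, whose solutions $z_2^1(t) = e^{it}$ have period exactly $2\pi$ and cross any fixed transversal transversally. Since the solutions depend on $s$ in class $\diffable{7}$ by the standard ODE dependence-on-parameters result already invoked after \eqref{eqpara} (citing \cite[Theorem 4.1]{MR1929104}), I would apply the implicit function theorem to the return/closing equation $\Psi(s,T) = 0$, checking that $\partial_T \Psi(0, 2\pi) \neq 0$ precisely because the unperturbed orbit returns transversally. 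This yields a locally unique $\diffable{7}$ function $s \mapsto T_s$ with $T_0 = 2\pi$.

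Next I would pin down the order of the error $T_s - 2\pi$. Because $p_{y_1}$ and $p_{x_0}$ are constant along the flow (they are cyclic: $p_{y_1}' = p_{x_0}' = 0$ in both \eqref{modelODE} and the spherical system), the angular variable's evolution is governed essentially by the $z_2$-equation, and Lemma~\ref{eqz_2} shows that the first correction to $z_2(s,t) = s e^{it}$ enters only at order $s^5$, with the intermediate orders $s^2, s^3, s^4$ vanishing identically. Feeding this into the closing condition, the deviation of the return time from $2\pi$ inherits the same gap: all corrections up to order $s^4$ cancel, so the leading correction to $T_s$ is $O(s^5)$. Concretely, I would expand $\Psi(s, 2\pi + \tau)$ in powers of $s$, use that the $s^1$ through $s^4$ contributions to the orbit reproduce the pure rotation (Lemma~\ref{eqz_2} and the vanishing of $z_2^j$ for $j=2,3,4$), and conclude $\tau = T_s - 2\pi = O(s^5)$.

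The main obstacle I anticipate is formulating the closing condition $\Psi(s,T)=0$ cleanly enough that the implicit function theorem applies and the order count is transparent; the subtlety is that periodicity is a condition on the full chain $(z_1, z_2)$ in $M$, not merely on $\widehat z_2$, and one must verify that the transversality needed at $s=0$ holds for the appropriate Poincar\'e section while keeping track that the $y_1$- and $x_0$-components (which enter at order $O(s^2)$ and order $O(1)$ respectively) do not spoil the $s^5$ gap established for $z_2$. I expect this to reduce, after using the conserved momenta and the weighted-order bookkeeping of Lemma~\ref{lemrho}, to a one-variable return map whose analysis is the routine part, but setting it up correctly is where the care is required.
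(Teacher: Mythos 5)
Your overall strategy (implicit function theorem on a return condition for smoothness, plus the gap in the orbit expansion of Lemma~\ref{eqz_2} for the asymptotics) is the same as the paper's, and the asymptotic step is essentially sound. One caveat there: the order-$s^4$ correction to $\widehat z_2$ is $-\tfrac{4}{3}a e^{3it}$, which does \emph{not} vanish; what saves the estimate is that this term is itself $2\pi$-periodic, so its increment over $[0,2\pi]$ vanishes and $\widehat z_2(s,2\pi)-\widehat z_2(s,0)=O(s^5)$. Your phrase ``all corrections up to order $s^4$ cancel'' must be read in that sense, and your proposed expansion of $\Psi(s,2\pi+\tau)$ would indeed exhibit it. The paper organizes this same fact as an integral bound, $O(s^5)=\int_{T_s}^{2\pi}\widehat z_2'(s,t)\,dt$ together with $\imag\bigl(ie^{it}+O(s^4)\bigr)\geq \tfrac{\sqrt{2}}{4}$ on $[T_s,2\pi]$, which gives $|T_s-2\pi|=O(s^5)$ directly.

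The genuine gap is precisely the point you defer as ``where the care is required.'' The closing condition $\widehat z_2(s,T)=\widehat z_2(s,0)$ is two real equations in the single unknown $T$, so the implicit function theorem cannot be applied to it as written; the reduction to a scalar equation is not routine bookkeeping but the actual content of the proof. The paper's device is the following: since $a\in\R$, the function $\psi$ of Lemma~\ref{leminit} is real-valued, hence $\imag\widehat z_2(s,0)=0$ for all $s$; one then applies the implicit function theorem to the single scalar equation $\iota(s,t):=\imag\widehat z_2(s,t)=0$ near $(0,2\pi)$, where $\partial_t\iota(0,2\pi)=1\neq 0$, obtaining a $\diffable{7}$ crossing time $\kappa(s)$. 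To conclude $T_s=\kappa(s)$ two further inputs are needed, both absent from your sketch: (i) periodicity of the orbit is an \emph{hypothesis} (the chain bounds a stationary disc), so $T_s$ exists and satisfies $\iota(s,T_s)=0$; and (ii) an a priori argument that $T_s\to 2\pi$ as $s\to 0$ --- the paper derives this from uniform convergence of $\widehat z_2(s,\cdot)$ to $e^{it}$ and an arc-length estimate --- without which $T_s$ need not lie in the uniqueness neighborhood furnished by the implicit function theorem and cannot be identified with $\kappa(s)$. With these two ingredients supplied, your plan becomes the paper's proof.
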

\begin{proof}

The function $(s,t) \mapsto \widehat z_2(s,t)$ of class $\mathcal{C}^7$ and, 
as $s\to 0$, converges uniformly to $t \mapsto e^{it}$ on any fixed 
neighborhood of $[0,2\pi]$. Since by assumption, $\widehat z_2(s,t)$ 
parametrizes a simple closed curve on $[0,T_s]$, the period $T_s$ tends to $2\pi$ as 
$s\to 0$.

By the $C^k$ smoothness of $\widehat z_2(s,t)$, we have $|\widehat z_2'(s,t)|\to |\widehat z_2'(0,t)| =1$ uniformly as $s\to 0$. Then the period $T_s$ must satisfy $\int_0^{T_s}|\widehat z_2'(s,t)|dt = \mbox{length} (\widetilde S_s) \to 2\pi$ as $s\to 0$. This is only possible if $T_s\to 2\pi$ as $s\to 0$, since any sequence $s_n\to 0$ such that $|T_{s_n}-2\pi|>\varepsilon>0$ would lead to a contradiction by taking the limit as $n\to\infty$.

To prove the smoothness of $T_s$, we will use the fact that the function $\psi$ appearing in Lemma \ref{leminit} {is real-valued (since $a\in \mathbb R$)}, which implies $\imag \widehat z_2(s,0)=0$ for $s\in [0,\epsilon]$. Consider the function $\iota(s,t) := \imag \widehat z_2(s,t)$. At $s=0$ and $t=2\pi$, we have
\[\iota(0,2\pi) = \imag \widehat z_2(0,2\pi) = 0, \ \ \frac{\partial \iota}{\partial t}(0,2\pi) = \imag \frac{\partial}{\partial t}(\widehat z_2(0,2\pi)) = 1. \]
By the implicit function theorem there exists a function $\kappa:[0,\epsilon]\to\mathbb R $, of the same smoothness as $\iota$, such that $\kappa(0)=2\pi$ and $\iota(s,\kappa(s))=0$. We claim that $T_s=\kappa(s)$. Indeed, by the implicit function theorem, $t=\kappa(s)$ represent the unique time in a neighborhood of $t=2\pi$ at which the curve $\widehat z_2(s,t)$ crosses the line $\imag z=0$. Since the period of $\widehat z_2(s,\cdot)$ approaches $2\pi$ as $s\to 0$, and $\imag \widehat z_2(s,0) = 0$, we necessarily have $\widehat z_2(s,\kappa(s))=\widehat z_2(s,0)$, which means that $T_s=\kappa(s)$ is of class $\diffable{7}$ near $s=0$.

We now turn to the asymptotic expression of $T_s$. By Lemma \ref{eqz_2}
\[\widehat z_2(s,t)=  e^{it} -\frac{4}{3} s^4 a e^{3it}  + O(s^5)\]
so that $\widehat z_2(s,t)$ can be seen as a small perturbation of the unit circle parametrized by $t\mapsto e^{it}$.  

Denoting the velocity vector of $\widehat z_2$ by $\widehat z_2 ' = \frac{\partial \widehat z_2}{\partial t}$, we have 
$$\widehat z_2 '(s,t)=ie^{it}+ O(s^4).$$ On the one hand, from the expression of $\widehat z_2(s,t)$, we have
\[\widehat z_2(s,2\pi) = \widehat z_2(s,0) + O(s^5).\]
On the other hand, we can write
\[\widehat z_2(s,2\pi) = \widehat z_2(s,T_s)+\int_{T_s}^{2\pi} \widehat z_2'(s,t)dt \]
and since $\widehat z_2(s,T_s) = \widehat z_2(s,0)$, putting together the expressions above we get
\[O(s^5)=\int_{T_s}^{2\pi}\widehat z_2'(s,t)dt = \int_{T_s}^{2\pi} \left(ie^{it} + O(s^4)\right) dt.  \]
For $s$ small enough we may suppose that $|ie^{it}+O(s^4)|\geq 1/2 $ and moreover that 
$$|\arg (ie^{it} +O(s^4))-\pi/2|<\pi/4$$
 for $t\in[T_s,2\pi]$ due to the fact that $T_s$ is close to $2\pi$. It follows that 
 $$\imag (ie^{it}+O(s^4))\geq \frac{\sqrt{2}}{4}$$ for $t\in [T_s,2\pi]$ and thus
\[ O(s^5) = \int_{T_s}^{2\pi} \imag \left(ie^{it} + O(s^4)\right) dt \geq \frac{\sqrt{2}}{4}|T_s-2\pi|\]
so that $|T_s-2\pi|=O(s^5)$.

\end{proof}

In view of Equation \eqref{eqmomscal}, we now  define 
$$c(s, t):=b_s(s\widehat z_2(s,t))=a_s(h_s^{-1}(s\widehat z_2(s,t))=a_s(h_s^{-1}(z_2(s,t)).$$

\begin{lemma}\label{lemc}
There is a choice of $a_s$ such that the function $c(s, t)$ is of class $\diffable{4}$ in a neighborhood of $\{0\}\times [0,2\pi]$ and satisfies 
$\int_0^{2\pi} c(s, t) dt =1$ for all $s>0$ small enough.
\end{lemma}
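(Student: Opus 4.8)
The plan is to read off $a_s$ from the stationarity condition as the solution of a Riemann--Hilbert problem, establish its regularity uniformly down to $s=0$ by rescaling, and only then use the remaining one-parameter freedom to normalize the integral. The freedom is easy to identify: given the disc $f_s$, the lift $(\widetilde g_s,\widetilde h_s)$ attached to $\mathcal N M$ solves a \emph{linear} problem, since on $b\Delta$ the pair $(\widetilde g_s,\widetilde h_s)$ must be a real multiple $\zeta a_s(\zeta)\bigl(\rho_{z_1},\rho_{z_2}\bigr)$ of the conormal direction with $a_s>0$. Because $\mathcal N M$ is totally real, the lift, and hence $a_s$, is determined up to a positive real constant, exactly as in the model case of the sphere; the family in Lemma~\ref{leminit} is modeled on the flat stationary discs of the sphere, for which $\widetilde h_s$ and $a_s$ are literally constant. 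Thus ``there is a choice of $a_s$'' refers precisely to fixing this constant, and the statement reduces to (i) producing a canonical $a_s$ with $c(s,t)=b_s(z_2(s,t))$ of class $\diffable4$ up to $s=0$, and (ii) rescaling by one $s$-dependent positive factor to force the integral to equal $1$.

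For (i) I would extract $a_s$ explicitly. Solving the second equation of \eqref{eq:stat} on $b\Delta$ gives
\[
a_s(\zeta)=\frac{\widetilde h_s(\zeta)}{\zeta\,\rho_{z_2}\bigl(g_s(\zeta),h_s(\zeta),\overline{g_s(\zeta)},\overline{h_s(\zeta)}\bigr)},
\]
a quotient that is real and positive. The numerator $\widetilde h_s$ is the boundary value of the Riemann--Hilbert solution, produced by an operator bounded on H\"older classes; since $\rho\in\diffable{12}$ the coefficient $\rho_{z_2}$ is $\diffable{11}$, and since the chain---hence $z_2(s,t)$ and the holomorphic fillings $g_s,h_s$---is $\diffable7$ in $(s,t)$, the quotient is as regular as these inputs allow away from the zeros of $\rho_{z_2}$. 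On the discs of our family one has $\rho_{z_2}=-\overline{z_2}+(\text{higher order})\neq0$ for $s>0$, while the interior pole permitted in $\widetilde h_s$ never meets $b\Delta$, so $a_s$ is a genuine regular positive function on $b\Delta$ for each fixed $s>0$. (Equivalently, since $a_s>0$ is real, $\log b_s$ is recovered from the boundary data by the conjugation operator of $\Omega_s$ up to the additive constant coming from the scaling freedom.)

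The delicate point, which I expect to be the main obstacle, is uniformity as $s\to0$: both $\widetilde h_s$ and $\rho_{z_2}$ vanish like $s$ along the shrinking discs, so the quotient has the indeterminate form $0/0$ in the limit and the naive estimates degenerate. I would resolve this exactly as the surrounding text is arranged, by passing to the rescaled picture $\widetilde\Omega_s=\Lambda_s(\Omega_s)$, $\widetilde S_s=\Lambda_s(S_s)$, in which $\widehat z_2(s,\cdot)$ is $\diffable7$ in $(s,t)$ up to $s=0$ with $\widehat z_2(0,t)=e^{it}$ and the domains converge to the unit disc. Using the weighted Taylor expansion of $\rho$ (Lemma~\ref{lemrho}) and the matching weighted rescaling of the disc, the rescaled numerator and denominator each factor a single power of $s$ whose cancellation leaves coefficients extending regularly across $s=0$ to the flat model; the parameter regularity of Riemann maps and of the conjugation operator on $\diffable{k,\alpha}$ domains then gives, after the finite loss of derivatives incurred by the rescaling, that $c(s,t)$ is $\diffable4$ on a neighborhood of $\{0\}\times[0,2\pi]$. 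Here $c$ is defined even past $t=T_s=2\pi+O(s^5)$ (Lemma~\ref{lemper}) because $a_s$ lives on all of $b\Delta$ and its pullback by the chain continues periodically.

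Finally, for (ii): since $a_s>0$ the function $c$ from step (i) is strictly positive and $\diffable4$, and as $s\to0$ it converges uniformly to the positive constant density of the flat model; hence $s\mapsto\int_0^{2\pi}c(s,t)\,dt$ is $\diffable4$, strictly positive, and bounded away from $0$ for small $s$. Replacing $a_s$ by $\bigl(\int_0^{2\pi}c(s,t)\,dt\bigr)^{-1}a_s$---a legitimate use of the scaling freedom---preserves the class $\diffable4$ and arranges $\int_0^{2\pi}c(s,t)\,dt=1$, which completes the argument.
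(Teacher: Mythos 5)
Your overall architecture (identify the one-parameter scaling freedom in $a_s$, prove regularity of $c$ down to $s=0$, then rescale to normalize the integral) matches the paper's, and your step (ii) is essentially the paper's final rescaling. The genuine gap is in step (i), which is the actual content of the lemma. You define $a_s = \widetilde h_s/(\zeta\,\rho_{z_2}\circ f_s)$ and treat $\widetilde h_s$ as the output of a Riemann--Hilbert solution operator ``bounded on H\"older classes''. But the lift is not given data: its existence is the stationarity hypothesis, and its joint regularity in $(s,\zeta)$ uniformly down to $s=0$ is exactly as hard to establish as that of $a_s$ itself, so this step is close to circular. Worse, the linear RH problem you would have to solve has coefficients $\partial\rho\circ f_s$ that degenerate as the discs shrink to the origin (the $\rho_{z_2}$-component vanishes like $s$), so no uniform bound on a solution operator is available off the shelf; your proposed fix --- that ``the rescaled numerator and denominator each factor a single power of $s$ whose cancellation leaves coefficients extending regularly across $s=0$'' --- is precisely the assertion that needs proof, and no argument is given. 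The same vagueness affects the derivative count: you never locate where the loss from $\diffable{7}$ (the regularity of the chain) down to $\diffable{4}$ occurs, whereas this bookkeeping is what makes the lemma's exponent meaningful.

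The paper bypasses the lift entirely by invoking Pang's result \cite{MR1250257}: any continuous $a_s$ witnessing stationarity is a positive, $s$-dependent constant multiple of the explicit function $\widehat a_s$ defined by $1/\widehat a_s(\zeta)=\zeta\,\partial\rho(f_s(\zeta))\cdot f_s'(\zeta)$. This converts the problem into regularity of compositions of \emph{known} objects, and the key observation making those tractable is that $f_s\circ h_s^{-1}\circ z_2(s,\cdot)$ is the chain $(z_1(s,\cdot),z_2(s,\cdot))$ itself; hence by the chain rule $f_s'(h_s^{-1}(z_2(s,t)))$ is a quotient of $t$-derivatives of functions with known smoothness. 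The count is then explicit: the chain is $\diffable{7}$ by ODE theory, $(s,t)\mapsto h_s^{-1}(z_2(s,t))$ is $\diffable{5}$ by parameter regularity of Riemann maps (after reparametrizing by $T_s/2\pi$ via Lemma \ref{lemper}, extending to diffeomorphisms of $\overline\Delta$, and citing \cite{MR3118395} and \cite{be-de-la}), and one further $t$-differentiation yields $\diffable{4}$; finally $1/\widehat a_s(h_s^{-1}(z_2(s,t)))=s^2+O(s^3)$ legitimizes the normalization. If you insist on your route through the lift, you would have to develop uniform parameter dependence for the degenerating RH problem --- a substantial theory that Pang's formula is specifically used to avoid.
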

The proof is an adaptation of both proofs of Lemma 3.3 and Lemma 3.4 in \cite{be-de-la}. The main differences come from the facts 
that, in the present paper, the parametrization intervals depend on $s$, and the  first component of the discs we consider  is not constant. 

\begin{proof}

We first show that $h_s^{-1}(z_2(s,t))$ is of class $\mathcal{C}^{5}$ in both variables $s$ and $t$. In order to extend the parametrization \eqref{eqpara} to a uniform domain, namely the unit disc, we consider 
$$ [0,2\pi]\ni t\mapsto \widehat z_2\left(s,\frac{T_s}{2\pi}t\right)\in \widetilde S_s.$$
According to Lemma \ref{lemper}, this map is of class $\mathcal{C}^7$ in both variables $s$ and $t$. Moreover, its form allows us to extend it to the interior of the unit disc, and, thus,  to obtain a family of diffeomorphisms $\Gamma_s=\Gamma(s,\cdot):\overline{\Delta} \to \overline{\widetilde \Omega_s}$ of class  $\mathcal{C}^{7}$ in both variables  $s$ and $z$. It follows from Corollary 9.4 in \cite{MR3118395} and Lemma 2.1 in \cite{be-de-la} that  the function $(s,t) \mapsto h_s^{-1}\left(s\widehat z_2\left(s,\frac{T_s}{2\pi}t\right)\right)$, and so $(s,t) \mapsto h_s^{-1}(z_2(s,t))$, are of class  $\mathcal{C}^{5}$.
 
We now define 
$\widehat{a_s}$,   by 
\begin{equation*}
\frac{1}{\widehat{a_s}(\zeta)}=\zeta\partial \rho (f_s(\zeta))\cdot f_s'(\zeta),
\end{equation*}
$\zeta \in b\Delta$, where $\cdot$ denotes the dot product in $\C^2$. According to Pang \cite{MR1250257}, since $f_s$  is stationary and satisfies  (\ref{eq:stat}) for a continuous function $a_s$ 
then  $a_s$ is a positive multiple of $\widehat{a_s}$, where the multiple may be any function of $s$. 
We now show that the function $(s,t) \mapsto \widehat{a_s}(h_s^{-1}(z_2(s,t))$ is of class $\mathcal{C}^4$. Note first that the map 
 $$\partial \rho \left(f_s(h_s^{-1}(z_2(s,t)e^{it})\right)=
\partial\rho \left(w_s(z_2(s,t)), z_2(s,t), \overline{w_s(z_2(s,t))}, \overline{z_2(s,t)}\right)$$ 
is of class $\diffable{7}$ in both variables.
To study the smoothness of $f_s' (h_s^{-1}(z_2(s,t))$, note that by the chain rule, we have 
\begin{eqnarray*}
\frac{d}{dt} \left(z_1(s,t), z_2(s,t))\right)&=& \frac{d}{dt} f_s (h_s^{-1}(z_2(s,t)) \\
&=& f_s' (h_s^{-1}(z_2(s,t)) \cdot \frac{d}{dt}h_s^{-1}(z_2(s,t)),
\end{eqnarray*}
and so 
$$f_s' (h_s^{-1}(z_2(s,t))=\left(\frac{\frac{d}{dt}\left(z_1(s,t))\right)}
{\frac{d}{dt}h_s^{-1}(z_2(s,t))},
\frac{\frac{d}{dt}\left(z_2(s,t))\right)}{\frac{d}{dt}h_s^{-1}(z_2(s,t))}
\right).
$$
Following the proof of Lemma 3.3 in \cite{be-de-la}, we have $h_s^{-1}(z_2(s,t))=e^{it}+O(s)$, and since $h_s^{-1}(z_2(s,t))$ is of class $\mathcal{C}^{5}$, so is the map $f_s' (h_s^{-1}(z_2(s,t))$. This shows that the function 
 $(s,t) \mapsto \widehat{a_s}(h_s^{-1}(z_2(s,t))$   is of class $\mathcal{C}^{4}$.

Finally, with the same proof of Lemma 3.4 in \cite{be-de-la}, we get
   $$\frac{1}{\widehat{a_s}(h_s^{-1}(z_2(s,t))}=  s^2+O(s^3).$$
 The function $a_s$ we seek can be obtained by rescaling $\widehat{a_s}$ to ensure  
 $\int_0^{2\pi} c(t,s) dt = 1$ for all $s> 0$ small enough.
  \end{proof}

We are now in a position to apply the moment conditions \eqref{eqmomscal} to the system (\ref{eq:stat2}). We start with the function $G_s$:
\[\int_{\widetilde S_s} z^m \left(sz b_s(sz) \frac{\partial \rho}{\partial z_1}(w_s(sz),sz,\overline{w_s(sz)},s\overline z)\right) dz =0,\]
for all  $m\geq 0$, that is, using the form of the defining function $\rho$,
\[\int_{\widetilde S_s} z^j b_s(sz)\left(1 +\frac{i}{2}\eta(\imag w_s(sz),sz, s\overline z)- (\imag w_s(sz))\cdot \frac{\partial \eta}{\partial {z_1}}(\imag w_s(sz), sz, s\overline z)\right) dz =0 \]
for all  $j\geq 1$. We use the parametrization of $\widetilde S_s$ given by \eqref{eqpara}. With this parametrization, as observed in Remark \ref{rempara}, $s\widehat z_2$ is of order $O(s)$ and $\imag w_s(s\widehat z_2)=y_1$ of order $O(s^2)$. Since $\eta$ is of weighted order $O(6)$, the first term involving $\eta$ in the above integral is of order $O(s^6)$, while the second one is of order $O(s^7)$. Accordingly, we obtain, for $j\geq 1$
\begin{equation*}
\int_0^{T_s} r^j e^{i(j+1)t} b_s(sre^{it})(1+O(s^6))\left(\frac{\partial r}{\partial t} + ir\right)dt =0. 
\end{equation*}
Using Lemma \ref{lemr}, we have
\[\int_0^{T_s} e^{i(j+1)t} c(s, t) \left(1 + j k(t) s^{4} + O(s^{5})\right) \left(i +  \left(\frac{dk}{dt}(t)+ik(t)\right) s^4  + O(s^5)\right)dt =0.\]
Developing the product leads to 
\[\int_0^{T_s} e^{i(j+1)t} c(s, t) \left(1 - \frac{4}{3} j  a e^{2it} s^{4} + O(s^{5})\right) \left(i -4 i a e^{2it} s^4  + O(s^5)\right)dt= \]
\[=i\int_0^{T_s} e^{i(j+1)t} c(s, t) \left(1 - \frac{4}{3}(j+3)  a e^{2it} s^{4} + O(s^{5})\right) dt= 0\]
In order to apply Fourier Analysis, we apply the change of variables $t \mapsto \frac{2\pi}{T_s}t$ and, using Lemma \ref{lemper}, we obtain
\[\int_0^{2\pi} e^{i(j+1)\frac{T_s}{2\pi}t} c\left(s, \frac{T_s}{2\pi}t\right) \left(1 - \frac{4}{3}(j+3)  a e^{2i\frac{T_s}{\pi}t} s^{4} + O(s^{5})\right) dt\]
\[=i\int_0^{2\pi} e^{i(j+1)t} c(s, t) \left(1 - \frac{4}{3}(j+3)  a e^{2it} s^{4} + O(s^{5})\right) dt= 0.\]
 We may then expand $c(s,.)$ in its Fourier series 
$$c(s, t)=\sum_{k=-\infty}^{+\infty}\gamma_k(s)e^{ikt}$$ 
where $\gamma_{-k}=\overline \gamma_k$ for all $k\in \mathbb Z$ and, by Lemma \ref{lemc}, $\gamma_k$ is $\diffable{4}$ and satisfies $\gamma_0(s)\equiv 1$.
Inserting the Fourier expansion of $c(s, t)$ in 
$$\int_0^{2\pi} e^{i(j+1)t} c(s, t) \left(1 - \frac{4}{3}(j+3)  a e^{2it} s^{4} + O(s^{5})\right) dt= 0,$$
we deduce that 
\begin{equation}\label{eq:order4}
\overline \gamma_{j+1}(s)=O(s^4), 
\end{equation}
for all  $j\geq 1$. Taking the fourth derivative with respect to $s$, we get
\begin{equation*}
\sum_{\ell=0}^4\binom{4}{\ell}\left(\frac{d^\ell \overline \gamma_{j+1}}{ds^\ell}(s)\delta_{4}^\ell - \frac{4!}{\ell!}\left(\frac{4}{3}(j+3) a  \frac{d^\ell \overline \gamma_{j+3}}{ds^\ell}(s)\right) s^{\ell} + O(s^{\ell+1})\right)  =0,
\end{equation*}
where $\delta_{4}^\ell$ is the Kronecker symbol, which for $j=1$ leads to
\begin{equation*}
\frac{d^4 \overline \gamma_{2}}{ds^4}(s) -4!\frac{16}{3}  a \overline \gamma_{4}(s) = O(s),
\end{equation*}
implying that
\begin{equation}\label{eq:gamma2}
\frac{d^4 \overline \gamma_{2}}{ds^4}(s)=O(s).
\end{equation}

\vspace{0.5cm}

We now apply the moment conditions  \eqref{eqmomscal}  to the function $H_s$  in (\ref{eq:stat2}):
\[\int_{\widetilde S_s} z^m \left(sz b_s(sz) \frac{\partial \rho}{\partial z_2}(w_s(sz),sz,\overline{w_s(sz)},s\overline z)\right) dz =0,\]
for all  $m\geq 0$. Due to the form of the defining equation $\rho$, we get 
\[
  \begin{aligned} \int_{\widetilde S_s} z^j b_s(sz) \biggl(s\overline z &+ 2as^5z\overline z^4 + 
4 a s^5 z^3\overline z^2 \\ &+(\imag w_s(sz))\cdot \frac{\partial \eta}{\partial {z_2}}(\imag w_s(sz), sz, s\overline z)+\frac{\partial \delta}{\partial {z_2}}(sz,s\overline z)) \biggr) dz =0,\end{aligned}\]
for all $j\geq 1$. We once again parametrize $\widetilde S_s$ by \eqref{eqpara}. With this parametrization, the  term involving $\eta$ in the above integral is of order $O(s^7)$, and the one involving $\delta$ is of order $O(s^6)$.
We then obtain 
\[\begin{aligned}\int_0^{T_s}  e^{i(j+1)t} c(s, t)\biggl(r^{j}\overline r e^{-it}s&+r^{j}(2a r\overline r^4 e^{-3it}+4 a r^3\overline r^2 e^{it})s^5\\
                                                                                  &+r^jO(s^6)\biggr)\left(\frac{\partial r}{\partial t} + ir\right)dt=0.\end{aligned}\] 
Using once again Lemma \ref{lemr} and dividing by $is$ gives
\[\int_0^{T_s}  e^{i(j+1)t} c(s, t)\left( e^{-it} +s^4\left(\frac{2}{3}a  e^{-3it} +  \left(-\frac{4}{3}j  +4\right) a e^{it}\right) + O(s^5)\right) \cdot\]
\[\cdot\left(1 -4 a e^{2it}  s^4  + O(s^5)\right)dt=0,\]
and, after developing the product, and applying as above the change of variables $t \mapsto \frac{2\pi}{T_s}t$, we obtain
for  $j\geq 1$
\[\int_0^{2\pi} c(s, t)\left( e^{ijt} +\frac{2}{3}\left(a  e^{i(j-2)t} -\frac{4}{3}j a e^{i(j+2)t}\right)s^4 + O(s^5)\right)dt=0.\]
Once again, we integrate from $0$ to $2\pi$, insert the Fourier expansion of $c(s, t)$, and differentiate four times with respect to $s$, and obtain
\begin{equation*}
\sum_{\ell=0}^4\binom{4}{\ell}\left(\frac{d^\ell \overline \gamma_j}{ds^\ell}(s)\delta_{4}^\ell + \frac{4!}{\ell!}\left( \frac{2}{3}a \frac{d^\ell \overline \gamma_{j-2}}{ds^\ell}(s)  -\frac{4}{3}j    a \frac{d^\ell \overline \gamma_{j+2}}{ds^\ell}(s)\right) s^{\ell} + O(s^{\ell+1})\right) =0.
\end{equation*}
For $j=2$, this implies
\begin{equation*}
\frac{d^4 \overline \gamma_{2}}{ds^4}(s) - 4! \left(\frac{2}{3}a \overline \gamma_{0}(s) - 
\frac{8}{3}  a \overline \gamma_{4}(s)\right) = O(s).
\end{equation*}
Using (\ref{eq:order4}), (\ref{eq:gamma2}), we then deduce that $\displaystyle \frac{2}{3} a\overline\gamma_{0}(s)=\frac{2}{3} a=0$. This conclude the proof of Theorem \ref{thmmain}.

{\bf Funding information} 
Research of the first two authors was  supported by a Research Group Linkage Programme from the Humboldt Foundation, a URB grant from the American University of Beirut, and by the Center for Advanced Mathematical Sciences.
Research of the third author was supported by the Austrian Science Fund FWF, project AI4557-N.

\bibliographystyle{abbrv}
\bibliography{chains}

\begin{thebibliography}{10}

\bibitem{be-de-la}
F.~Bertrand, G.~Della~Sala, and B.~Lamel.
\newblock Extremal discs and segre varieties for real-analytic hypersurfaces in
  {${\bf C}^2$}.
\newblock {\em To appear in Proc. Amer. Math. Soc.}, 2022.

\bibitem{MR3118395}
F.~Bertrand and X.~Gong.
\newblock Dirichlet and {N}eumann problems for planar domains with parameter.
\newblock {\em Trans. Amer. Math. Soc.}, 366(1):159--217, 2014.

\bibitem{Cartan:1932ws}
E.~Cartan.
\newblock {Sur la g{\'e}om{\'e}trie pseudo-conforme des hypersurfaces de
  l'espace de deux variables complexes II}.
\newblock {\em Annali della Scuola Normale Superiore di Pisa. Classe di
  Scienze. Serie II}, 1(4):333--354, 1932.

\bibitem{Cartan:1933ux}
E.~Cartan.
\newblock {Sur la g{\'e}om{\'e}trie pseudo-conforme des hypersurfaces de
  l'espace de deux variables complexes}.
\newblock {\em Annali di Matematica Pura ed Applicata, Series 4}, 11(1):17--90,
  Dec. 1933.

\bibitem{Chern:1974wu}
S.~S. Chern and J.~K. Moser.
\newblock {Real hypersurfaces in complex manifolds}.
\newblock {\em Acta Mathematica}, 133:219--271, 1974.

\bibitem{FaranV:1981iz}
J.~J. Faran, V.
\newblock {Lewy's curves and chains on real hypersurfaces}.
\newblock {\em Transactions of the American Mathematical Society},
  265(1):97--109, 1981.

\bibitem{MR407321}
C.~Fefferman.
\newblock Correction to: ``{M}onge-{A}mp\`ere equations, the {B}ergman kernel,
  and geometry of pseudoconvex domains'' ({A}nn. of {M}ath. (2) {\bf 103}
  (1976), no. 2, 395--416).
\newblock {\em Ann. of Math. (2)}, 104(2):393--394, 1976.

\bibitem{Fefferman:1976wl}
C.~L. Fefferman.
\newblock {Monge-Amp{\`e}re equations, the Bergman kernel, and geometry of
  pseudoconvex domains}.
\newblock {\em Annals of Mathematics. Second Series}, 103(2):395--416, 1976.

\bibitem{forstneric}
F.~Forstneri\v{c}.
\newblock Analytic disks with boundaries in a maximal real submanifold of
  {${\bf C}^2$}.
\newblock {\em Ann. Inst. Fourier (Grenoble)}, 37(1):1--44, 1987.

\bibitem{globevnik1}
J.~Globevnik.
\newblock Perturbation by analytic discs along maximal real submanifolds of
  {$\mathbb{ C}^N$}.
\newblock {\em Math. Z.}, 217(2):287--316, 1994.

\bibitem{globevnik2}
J.~Globevnik.
\newblock Perturbing analytic discs attached to maximal real submanifolds of
  {${\bf C}^N$}.
\newblock {\em Indag. Math. (N.S.)}, 7(1):37--46, 1996.

\bibitem{MR1929104}
P.~Hartman.
\newblock {\em Ordinary differential equations}, volume~38 of {\em Classics in
  Applied Mathematics}.
\newblock Society for Industrial and Applied Mathematics (SIAM), Philadelphia,
  PA, 2002.
\newblock Corrected reprint of the second (1982) edition [Birkh\"{a}user,
  Boston, MA; MR0658490 (83e:34002)], With a foreword by Peter Bates.

\bibitem{MR1067341}
H.~Jacobowitz.
\newblock {\em An introduction to {CR} structures}, volume~32 of {\em
  Mathematical Surveys and Monographs}.
\newblock American Mathematical Society, Providence, RI, 1990.

\bibitem{lempert}
L.~Lempert.
\newblock La m\'{e}trique de {K}obayashi et la repr\'{e}sentation des domaines
  sur la boule.
\newblock {\em Bull. Soc. Math. France}, 109(4):427--474, 1981.

\bibitem{MR1250257}
M.-Y. Pang.
\newblock Smoothness of the {K}obayashi metric of nonconvex domains.
\newblock {\em Internat. J. Math.}, 4(6):953--987, 1993.

\bibitem{Tanaka:1962ti}
N.~Tanaka.
\newblock {On the pseudo-conformal geometry of hypersurfaces of the space of
  $n$ complex variables}.
\newblock {\em J. Math. Soc. Japan}, 14:397--429, 1962.

\bibitem{webster}
S.~M. Webster.
\newblock On the reflection principle in several complex variables.
\newblock {\em Proc. Amer. Math. Soc.}, 71(1):26--28, 1978.

\end{thebibliography}

\end{document}